\newtheorem{theorem}{Theorem}
\theoremstyle{definition}
\newtheorem{definition}{Definition}%[section]
\newtheorem{proposition}{Proposition}%[section]
\newtheorem{lemma}{Lemma}
\newtheorem{corollary}{Corollary}
\newtheorem{algorithm}{Algorithm}
\newtheorem{remark}{Remark}
\newcommand*{\uni}{}
\DeclareRobustCommand*{\uni}[1]{%
  \begingroup
    \StringEncodingConvert\x{%
      \pdfunescapehex{%
        00%
        \ifnum"#1<"100000 0\fi
        \ifnum"#1<"10000 0\fi
        \ifnum"#1<"1000 0\fi
        \ifnum"#1<"100 0\fi
        \ifnum"#1<"10 0\fi
        #1%
      }%
    }{utf32be}{utf8}%
    \everyeof{\noexpand}%
    \endlinechar=-1 %
  \edef\x{%
    \endgroup
    \scantokens\expandafter{%
      \expandafter\unexpanded\expandafter{\x}%
    }%
  }\x
}
\newcommand{\cjktext}[1]{\protect\begin{CJK*}{UTF8}{gbsn}#1\end{CJK*}}
\DeclareMathOperator{\proj}{proj}
\begin{document}

%
% paper title
% Titles are generally capitalized except for words such as a, an, and, as,
% at, but, by, for, in, nor, of, on, or, the, to and up, which are usually
% not capitalized unless they are the first or last word of the title.
% Linebreaks \\ can be used within to get better formatting as desired.
% Do not put math or special symbols in the title.
%\title{Influence Spread on Multiplex Networks of Heterogeneous Agents}
\title{Influence Spread in the Heterogeneous Multiplex Linear Threshold Model \thanks{This research has been supported in part by ONR grant N00014-19-1-2556 and  ARO grant W911NF-18-1-0325.} }
%
%
% author names and IEEE memberships
% note positions of commas and nonbreaking spaces ( ~ ) LaTeX will not break
% a structure at a ~ so this keeps an author's name from being broken across
% two lines.
% use \thanks{} to gain access to the first footnote area
% a separate \thanks must be used for each paragraph as LaTeX2e's \thanks
% was not built to handle multiple paragraphs
%
% 
% \author{Yaofeng~Desmond~Zhong~\cjktext{(\uni{949F} \uni{8000} \uni{950B})},~\IEEEmembership{Student Member,~IEEE,}
%         Vaibhav~Srivastava,~\IEEEmembership{Senior Member,~IEEE,}
%         Naomi~Ehrich~Leonard,~\IEEEmembership{Fellow,~IEEE}% <-this % stops a space
        \author{Yaofeng~Desmond~Zhong~\cjktext{(\uni{949F} \uni{8000} \uni{950B})},
        Vaibhav~Srivastava, and 
        Naomi~Ehrich~Leonard% <-this % stops a space
\thanks{Y. D. Zhong and N. E. Leonard are with the Department of Mechanical and Aerospace Engineering, Princeton University, Princeton, NJ 08540 USA e-mail: (y.zhong@princeton.edu; naomi@princeton.edu). V. Srivastava is with the Department of Electrical and Computer Engineering, Michigan State University, East Lansing, MI 48824 USA (e-mail: vaibhav@egr.msu.edu).
}% <-this % stops a space
%\thanks{V. Srivastava is with the Department of Electrical and Computer Engineering, Michigan State University, East Lansing, MI 48824 USA (e-mail: vaibhav@egr.msu.edu).}% <-this % stops a space
%\thanks{Manuscript received; revised .}
}

% note the % following the last \IEEEmembership and also \thanks - 
% these prevent an unwanted space from occurring between the last author name
% and the end of the author line. i.e., if you had this:
% 
% \author{....lastname \thanks{...} \thanks{...} }
%                     ^------------^------------^----Do not want these spaces!
%
% a space would be appended to the last name and could cause every name on that
% line to be shifted left slightly. This is one of those "LaTeX things". For
% instance, "\textbf{A} \textbf{B}" will typeset as "A B" not "AB". To get
% "AB" then you have to do: "\textbf{A}\textbf{B}"
% \thanks is no different in this regard, so shield the last } of each \thanks
% that ends a line with a % and do not let a space in before the next \thanks.
% Spaces after \IEEEmembership other than the last one are OK (and needed) as
% you are supposed to have spaces between the names. For what it is worth,
% this is a minor point as most people would not even notice if the said evil
% space somehow managed to creep in.

% The paper headers
\markboth{}%
{}
% The only time the second header will appear is for the odd numbered pages
% after the title page when using the twoside option.
% 
% *** Note that you probably will NOT want to include the author's ***
% *** name in the headers of peer review papers.                   ***
% You can use \ifCLASSOPTIONpeerreview for conditional compilation here if
% you desire.

% If you want to put a publisher's ID mark on the page you can do it like
% this:
%\IEEEpubid{0000--0000/00\$00.00~\copyright~2015 IEEE}
% Remember, if you use this you must call \IEEEpubidadjcol in the second
% column for its text to clear the IEEEpubid mark.

% use for special paper notices
%\IEEEspecialpapernotice{(Invited Paper)}

% make the title area
\maketitle

% As a general rule, do not put math, special symbols or citations
% in the abstract or keywords.
\begin{abstract}
The linear threshold model (LTM) has been used to study spread on single-layer networks defined by one inter-agent sensing modality and agents homogeneous in protocol. We define and analyze the heterogeneous multiplex LTM to study spread on multi-layer networks with each layer representing a different sensing modality and agents heterogeneous in protocol.  
Protocols are designed to distinguish signals from different layers: 
an agent becomes active if a sufficient number of its neighbors in each of any $a$ of the $m$ layers is active.
We focus on Protocol OR, when $a=1$, and Protocol AND, when $a=m$, which model agents that are most and least readily activated, respectively. We develop theory and algorithms to compute the size of the spread at steady state for any set of initially active agents and to analyze the role of distinguished sensing modalities, network structure, and heterogeneity. We show how heterogeneity manages the tension in spreading dynamics between sensitivity to inputs and robustness to disturbances.
\end{abstract}

% Note that keywords are not normally used for peerreview papers.
\begin{IEEEkeywords}
Cascade dynamics, heterogeneity, multi-agent systems, multi-layer networks, social networks, contagion 
%Influence spread, linear threshold model, multiplex networks, heterogeneity
\end{IEEEkeywords}

% For peer review papers, you can put extra information on the cover
% page as needed:
% \ifCLASSOPTIONpeerreview
% \begin{center} \bfseries EDICS Category: 3-BBND \end{center}
% \fi
%
% For peerreview papers, this IEEEtran command inserts a page break and
% creates the second title. It will be ignored for other modes.
\IEEEpeerreviewmaketitle

\section{Introduction}
% The very first letter is a 2 line initial drop letter followed
% by the rest of the first word in caps.
% 
% form to use if the first word consists of a single letter:
% \IEEEPARstart{A}{demo} file is ....
% 
% form to use if you need the single drop letter followed by
% normal text (unknown if ever used by the IEEE):
% \IEEEPARstart{A}{}demo file is ....
% 
% Some journals put the first two words in caps:
% \IEEEPARstart{T}{his demo} file is ....
% 
% Here we have the typical use of a "T" for an initial drop letter
% and "HIS" in caps to complete the first word.
% \IEEEPARstart{T}{his} demo file is intended to serve as a ``starter file''
% for IEEE journal papers produced under \LaTeX\ using
% IEEEtran.cls version 1.8b and later.
% You must have at least 2 lines in the paragraph with the drop letter
% (should never be an issue)

\IEEEPARstart{T}{he} spread of an activity or innovation across a population of agents that sense or communicate over a network has critical consequences for a wide range of systems from biology to engineering. The adoption of a strategy, such as wearing a face mask during a pandemic, can spread across a social network even when there are only a few early adopters. The observation and response to a threat by one or more vigilant animals can spread through a social animal group. A robot that detects a change in the environment and takes action can spread its behavior across a networked robot team.

To predict and control spread, we present and analyze a new model that captures the realities of {\em multiple inter-agent sensing modalities} and {\em heterogeneity in responsiveness of agents to others}. We develop and prove the validity of new algorithms that provide the means to systematically determine the spreading influence of a set of agents as a function of multi-layer network structure and agent heterogeneity.

The linear threshold model (LTM), from Granovetter~\cite{granovetter1978threshold} and  Schelling~\cite{schelling1978micromotives}, describes the spread of an activity %has been modelled 
as discrete-time, discrete-valued state dynamics where an agent adopts or rejects an activity by comparing the fraction of its neighbors that have adopted the activity to its individual threshold. Kempe et al.~\cite{kempe2003maximizing} used the LTM with random thresholds to investigate spread of an activity over a population on a single-layer network.
Lim et al.~\cite{lim2015simple} introduced and analyzed the notion of cascade and contagion centralities in the model of~\cite{kempe2003maximizing}. The LTM on single-layer networks has also been studied in \cite{acemoglu2011diffusion, rosa2013non, garulli2015analysis, fardad2017linear, rossi2017threshold} and generalized to continuous-time, real-valued dynamics in~\cite{zhong2019cdc}. 

The single-layer network in the LTM represents a single sensing modality or a projection of multiple sensing modalities. Yet, in real-world systems, agents may distinguish the different sensing modalities, rather than project them, in ways that impact spread. For example, someone deciding whether or not to wear a mask may consider as separate signals what they see others doing in the neighborhood and what they hear over social media that others are doing. And, how they act on the signals may differ from person to person. A more readily activated person starts wearing a mask when they observe enough of the first {\em or} second group wearing a mask. A less readily activated person starts wearing a mask only when they observe enough of the first {\em and} second groups wearing a mask. 

In this paper we leverage multiplex (multi-layer) networks to model spread in a population of heterogeneous agents that interact through, and distinguish, multiple sensing modalities.  Multiplex networks have been used to study consensus dynamics \cite{gomez_diffusion_2013, trpevski_discrete-time_2014, shao_relative_2017, antonopoulos_opinion_2018, vasconcelos_consensus_2019}. Ya{\u{g}}an and Gligor~\cite{yaugan2012analysis} studied a multiplex LTM using a weighted average of activity across layers.  Other models of spread in the case of multiple sensing modalities are reviewed in~\cite{salehi_spreading_2015}, but most restrict to homogeneous agents.

In~\cite{zhong2017linear}, we first introduced the LTM on multiplex networks with homogeneous agents, where the graph for each layer is 
%given by the graph 
associated with a different sensing modality, and Protocols OR and AND  distinguish signals from different layers to model more and less readily activated agents, respectively.  We analyzed the duplex (two-layer) LTM with agents that are homogeneous in protocol and showed how to compute cascade centrality, an agent's influence on the steady-state size of the cascade.  Yang et al.~\cite{yang-multipex} studied the influence minimization problem for the homogeneous model of~\cite{zhong2017linear}.

Our contributions in the present paper are multifold. First, we define the heterogeneous multiplex LTM to analyze spreading dynamics on an arbitrary number of network layers with agents that employ protocols heterogeneously. Second, we define the heterogeneous multiplex live-edge model (LEM), which generalizes \cite{kempe2003maximizing}, and we introduce the live-edge tree to define reachability on this LEM. We prove a key result on equivalence of probabilities for the LTM and LEM. 

Third, we derive Algorithms \ref{alg} and \ref{alg:map} to compute influence spread for the heterogeneous multiplex LTM.  Algorithm \ref{alg} is provably correct and useful for small networks.  
Algorithm \ref{alg:map} maps the influence spread calculation to an inference problem in a Bayesian network and is efficient for large networks. We prove that calculating influence spread is \#P-complete.

Fourth, we derive analytical expressions for influence spread in  classes of multiplex networks. We show {\em how} ORs enhance and ANDs diminish spreading relative to  the projected network. Fifth, we investigate heterogeneity in spreading and show how it can be used to manage the tradeoff between sensitivity to a real input and robustness to a spurious signal.

Section II describes multiplex networks.  Sections III and IV introduce the heterogeneous multiplex LTM and LEM, respectively. We prove their equivalence in Section V.  Sections VI and VII present Algorithms 1 and 2. 
Section VIII presents analytical expressions for influence spread. Heterogeneity is studied in Section IX. We conclude in Section X.

\section{Multiplex Networks}
A \textit{multiplex network} $\mathcal{G}$ is a family of  $m\in \mathbb{N}$ directed weighted graphs $G_1,...,G_m$. Each graph $G_k=(V,E^k)$, $k = 1, \ldots, m$, is a {\em layer} of the multiplex network. The agent set $V=\{1,2,3,...,n\}$ is the same in all layers. The edge set of layer $k$ is $E^k \subseteq V \times V$ and can be different in different layers. Each edge $e^k_{i,j}\in E^k$, pointing from $i$ to $j$ in layer $k$, is assigned a weight $w^k_{i,j} \in \mathbb{R}^+$.
Here we adopt the ``sensing" convention for edges: edge $e^k_{i,j}$ exists if agent $i$ can sense agent $j$ in layer $k$. 
If edge $e^k_{i,j}$ exists, agent $j$ is an {\em out-neighbor} of agent $i$ in layer $k$. We denote the set of out-neighbors of $i$ in layer $k$ as $N_i^k$. We say that the weight of agent $i$'s out-neighbor $j$ in layer $k$ is the weight  $w^k_{i,j}$. We assume the weights of all out-neighbors for every agent sum up to $1$, i.e., $\sum_{j \in N_i^k} w^k_{i,j}=1$ for every agent $i$.  A {\em monoplex network} is a multiplex network with $m=1$, i.e., with only a single layer.

For undirected graphs, every edge is modeled with two opposing directed edges. For unweighted graphs, every edge $e^k_{i,j}$ can be assigned a weight $w^k_{i,j}=1/d^k_i$, where $d^k_i$ is the {\em out-degree} of node $i$ in layer $k$ and equal to the number of out-neighbors of node $i$ in layer $k$.
A \textit{projection network} of $\mathcal{G}$ is the graph $\proj(\mathcal{G})=(V,E)$ where $E=\cup_{k=1}^m E^k$.

\section{The Heterogeneous Multiplex LTM}
The linear threshold model (LTM) is described by a discrete-time dynamical system in which the state $x_i(t) \in \{0,1\}$ of each agent $i$  at iteration $t$ is inactive with $x_i(t) = 0$ or active with $x_i(t) = 1$. The LTM protocol determines how the active state spreads through the network. Our focus is on which agents will be active at steady state as a function of which agents are active initially.  We define  $\bar{x}_i = \lim_{t \rightarrow \infty} x_i(t)$.

In Section~\ref{monoLTM}, we recall the LTM %protocol 
for monoplex networks.  
In Section~\ref{multiLTM}, we generalize the LTM to multiplex networks by defining protocols for how the active state spreads when signals from different layers are distinguished. Our definition %generalization to the multiplex LTM 
allows for heterogeneity among agents in protocol.

Let $S_t$ be the set of agents that are active by the end of iteration $t$. Once active, an agent remains active so that $S_{t-1} \subseteq S_t$. At $t=0$, all agents are inactive except the initially active set  $S_0$. Every agent in $S_0$ is called a {\em seed}.  The LTM protocol determines when inactive agents at iteration $t-1$ become active at iteration $t$. A steady state is reached when $S_{t-1} = S_t$.   

\subsection{Monoplex LTM} \label{monoLTM}
The LTM protocol on a monoplex network is defined as follows (e.g., \cite{kempe2003maximizing}). Each agent $i = 1, \ldots, n$ chooses a  threshold $\mu_i$ randomly and independently from a uniform distribution $U(0,1)$. An inactive agent $i$ at iteration $t-1$ becomes active at iteration $t$ if the sum of weights of its active out-neighbors at $t-1$ exceeds $\mu_i$, that is, if $\mu_{i} < \sum_{j\in N_i \cap S_{t-1}}w_{i,j}$.  For $n$ agents, steady state is reached by  $t \leq n$.
 
\subsection{Multiplex LTM} \label{multiLTM}
We introduce the LTM on a multiplex network with $m$ layers by defining a family of protocols as follows. Each agent $i$ chooses a threshold $\mu_i^k$ in each layer $k$ for $i = 1, \ldots, n$ and $k = 1, \ldots, m$. Each $\mu_i^k$ is randomly and independently drawn from the uniform distribution $U(0,1)$. In general, each agent has different neighbors in different layers. If the sum of weights of active out-neighbors of agent $i$ in layer $k$ at  $t-1$ exceeds $\mu_i^k$, that is, $\mu_i^k < \sum_{j\in N_i^k \cap S_{t-1}}w_{i,j}^k$, we say agent $i$ receives a positive input $y_i^k(t)=1$ from layer $k$ at  $t$. Otherwise, agent $i$ receives a neutral input $y_i^k(t)=0$. 

The protocols that determine whether or not an inactive agent at $t-1$ becomes active at $t$  account for the possibility that the inputs it receives at $t$ from the different layers may be conflicting. Let the average input agent $i$ receives at $t$ be $y_i(t)=\sum_{k=1}^m y_i^k(t)/m$. 

\begin{definition}[Multiplex LTM Protocol]
\label{def:multiplexLTM}
Given multiplex network $\mathcal{G}$ with seed set $S_0$,  the {\em multiplex LTM protocol for agent $i$}
%, $i = 1, \ldots, n$ 
is parametrized by $\delta_i \in [1/m,1]$ as follows: 
\begin{align}
  x_i(0) &= 1, \quad  \forall i \in S_0\\
  x_i(0) &= 0, \quad \forall i \notin S_0\\
  x_i(t) &=
  \begin{cases}
    1, & \text{if } y_i(t)\geq \delta_i \text{ or } x_i(t-1)=1 \\
    0, & \text{otherwise}. \\
  \end{cases}
\end{align}
We identify two protocols for the limiting values of $\delta_i$:

\noindent
{\em Protocol OR}: $\delta_i=1/m$. Inactive agent $i$ at iteration $t-1$ becomes active at iteration $t$  if it receives a positive input from {\em any} layer at $t$;

\noindent
{\em Protocol AND}: $\delta_i=1$. Inactive agent $i$ at iteration $t-1$ becomes active at iteration $t$ if it receives positive inputs from {\em all} layers at $t$. 
$\qed$
\end{definition}

The multiplex LTM protocol specifies that inactive agent $i$ at iteration $t-1$ becomes active at iteration $t$ if it receives a positive input from any $a_i\in \{1, \ldots, m \}$ of the $m$ layers, where $(a_i - 1)/m < \delta_i \leq a_i/m$.  Asymmetric sensitivity to layers can be modelled with $y_i$ a convex combination of $y_i^k$.  

In this paper, we examine the two limiting cases: Protocol OR, where $a_i=1$, and Protocol AND, where $a_i = m$. Analysis in these cases is sufficient for understanding heterogeneity and spreading dynamics on multi-layer networks. Our theory can be extended to protocols for  $a_i \in \{2, \ldots, m-1\}$. 

\begin{remark}
\label{rem:spreading}
Protocol OR models agents that are readily activated: there only needs to be sufficient activity among neighbors in one layer at $t-1$ in order for agents to become active at $t$.  Protocol AND models agents that are conservatively activated: there needs to be sufficient activity among neighbors in every layer at $t-1$ in order for agents to become active at $t$. Thus, agents with Protocol OR enhance spreading and agents with Protocol AND diminish spreading.
\end{remark}

We study heterogeneous networks in which some agents use Protocol OR while the others use Protocol AND.  
\begin{definition}[Sequence of Protocols] Let $u_i \in \{\text{OR},\text{AND}\}$ be the protocol used by agent $i$.  We define the {\em sequence of protocols} $\mathcal{U} = (u_1,u_2,...,u_n)$ to be the protocols used by the $n$ agents ordered from agent 1 to agent $n$.  
\end{definition}

\begin{lemma}
    For a multiplex network $\mathcal{G}$ with $n$ agents, the multiplex LTM converges in at most $n$ iterations.
\end{lemma}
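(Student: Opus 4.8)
The plan is to combine the monotonicity of the active sets $S_0\subseteq S_1\subseteq\cdots$ with a pigeonhole argument on their cardinalities. First I would record two facts that follow directly from Definition~\ref{def:multiplexLTM}. (i) The update is memoryless in the active set: whether agent $i$ is active at iteration $t$ depends only on $S_{t-1}$, since each input $y_i^k(t)$ is a function of $N_i^k\cap S_{t-1}$ and the fixed thresholds $\mu_i^k$, while the clause $x_i(t-1)=1$ merely says $i\in S_{t-1}$. Hence, for fixed thresholds, there is a deterministic set map $F$ with $S_t=F(S_{t-1})$ for all $t\ge 1$. (ii) Monotonicity: $S_{t-1}\subseteq S_t$ for all $t$, as already built into the model (``once active, an agent remains active'').

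Next I would prove an absorption property: if $S_{t-1}=S_t$ for some $t$, then $S_{t'}=S_t$ for all $t'\ge t$. This is immediate from (i) by induction, since $S_{t+1}=F(S_t)=F(S_{t-1})=S_t$. So it suffices to exhibit some $t\le n$ with $S_{t-1}=S_t$.

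I would then argue by contradiction. If $S_{t-1}\subsetneq S_t$ strictly for every $t\in\{1,\dots,n\}$, then $|S_t|\ge |S_{t-1}|+1$ for each such $t$, whence $|S_n|\ge |S_0|+n$. Since $|S_n|\le n$, this forces $S_0=\emptyset$. But with no seeds, no agent can ever receive a positive input from any layer (a positive input in layer $k$ requires an active out-neighbor, and $\mu_i^k\ge 0$), so $S_1=\emptyset=S_0$, contradicting $S_0\subsetneq S_1$. Therefore $S_{t-1}=S_t$ for some $t\le n$, and by the absorption property the multiplex LTM has reached steady state by iteration $n$.

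I do not anticipate a real obstacle: the only mild care needed is the boundary case $S_0=\emptyset$ in the counting step. The one conceptual point — readable straight off Definition~\ref{def:multiplexLTM} — is that, for fixed thresholds, the dynamics form a deterministic monotone self-map on the subsets of $V$, so any fixed point is absorbing; heterogeneity in the protocol sequence $\mathcal{U}$ and in the $\delta_i$ plays no role, since the bound depends only on monotonicity.
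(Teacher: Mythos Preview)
Your proof is correct and follows essentially the same pigeonhole-by-contradiction idea as the paper: strict growth of the nested active sets for $n$ steps would force more than $n$ agents. You are simply more explicit than the paper about the absorption property (via the deterministic set map $F$) and about the boundary case $S_0=\emptyset$, which the paper's argument tacitly assumes away.
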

\begin{proof}
Assume the multiplex LTM converges in more than $n$ iterations. Then at least one agent switches from inactive to active in each of the first $n$ iterations and these agents are distinct. There is at least one agent in the initial active set that is not one of those $n$ agents. This implies at least $n+1$ agents in the network, which is a contradiction. 
\end{proof}

\section{The Heterogeneous Multiplex LEM}
Our approach to analyzing the multiplex LTM generalizes the  approach in~\cite{kempe2003maximizing}, which uses the live-edge model (LEM) for monoplex networks to analyze the monoplex LTM.  In this section we define the multiplex LEM.
In Section~\ref{monoLEM}, we recall the LEM proposed in~\cite{kempe2003maximizing} for monoplex networks. In Section~\ref{duplexLEM}, we generalize the LEM to multiplex networks and introduce the notion of reachability-$\mathcal{U}$ on the multiplex LEM. Unlike in our earlier work \cite{zhong2017linear}, the reachability we propose here allows for heterogeneous protocols among agents.

\subsection{Monoplex LEM and Reachability}
\label{monoLEM}
The LEM for a monoplex network is defined as follows \cite{kempe2003maximizing}. Let $S_0$ be the set of seeds. Each unseeded agent randomly selects one of its outgoing edges with probability given by the edge weight. The selected edge is labeled as ``live", while the unselected edges are labeled as ``blocked". The seeds block all of their outgoing edges. Every directed edge will thus be either live or blocked.  The choice of edges that are live is called a \emph{selection of live edges}. 

Let $L$ be the set of all possible selections of live edges. The probability $q_l$ of  selection  $l \in L$ is the product of the weights of the live edges in selection $l$.
Because the selection of live edges can be done at the same time for every node, the LEM can be viewed as a static model. The LEM can alternatively be viewed as an iterative process in the case the live edges are selected sequentially.

A \textit{live-edge path} \cite{kempe2003maximizing} is a directed path that consists only of live edges.  Let $\mathcal{L}_{ij}$ be the set of all possible distinct live-edge paths from agent $i \notin S_0$ to $j \in S_0$.  %\nl{(I see the problem with my request to use the ``sensing'' notation since the direction of reachability is not consistent with the direction of the path. We may want to add a line to pre-empt confusion here)}.  
The probability $r_{\alpha}$ of live-edge path $\alpha\in \mathcal{L}_{ij}$ is the product of the edge weights along the path. We say $i\notin S_0$ is {\em reachable from $j\in S_0$ by live-edge path $\alpha$ with probability $r_{\alpha}$},  
and $i\notin S_0$ is {\em reachable from $j\in S_0$ with probability $r_{ij}$}, where $r_{ij} = \sum_{\alpha \in \mathcal{L}_{ij}}r_{\alpha}$.
%\nl{(I am abusing notation  above since I am summing over $\alpha$, which is not an index but rather represents a path.  It should be clear but we can fix if necessary.)}

Alternatively, we can compute $r_{ij}$ in terms of selections of live-edges. Let $L_{ij} \subseteq L$ be the set of all selections of live edges that contain a live-edge path from $i\notin S_0$ to $j\in S_0$. 
Then, $r_{ij} = \sum_{l \in L_{ij}} q_l$. 
Likewise, let $L_{iS_0} \subseteq L$ be the set of all selections of live edges that contain a live-edge path from $i\notin S_0$ to at least one node $j \in S_0$. Then, $i\notin S_0$ is {\em reachable from $S_0$ with probability $r_{iS_0}$}, where $r_{iS_0} = \sum_{l \in L_{iS_0}} q_l$.

\subsection{Multiplex LEM and Reachability}
\label{duplexLEM}

We introduce the LEM on a multiplex network as follows.
%with $m$ layers.  as follows.  
\begin{definition}[Multiplex LEM]
\label{def:multiplexLEM}
Consider a multiplex network $\mathcal{G}$ with seed set $S_0$.
In each layer $k$, each unseeded agent $i$ randomly selects one of its outgoing edges $e^k_{i,j_k}$ with probability $w^k_{i,j_k}$. The selected edges are labeled as ``live", while the unselected edges are labeled as ``blocked". The seeds block all of their outgoing edges in every layer. The choice of edges that are live is a \emph{multiplex selection of live edges}.  Let $L$ be the set of all possible multiplex selections of live edges. The probability $q_l$ of selection $l \in L$ is the product of the weights of all live edges in selection $l$. %$\qed$
\end{definition}

 The challenge in generalizing the LEM to multiplex networks is in properly defining reachability. Here we introduce the  live-edge tree, which we use to define reachability. % in multiplex networks.

\begin{definition}[Live-edge Tree]\footnote{To highlight  key differences between multiplex and monoplex networks, we assume each $i \notin S_0$ has at least one neighbor in each layer. If not, 
with a slight modification of Defs.~\ref{def:live_edge_tree}- \ref{def:reachability}, the theory and computation are still valid.}
\label{def:live_edge_tree}
 Given a set of seeds $S_0$ and a multiplex selection of live edges $l \in L$, the {\em live-edge tree} $T^l_i$ associated with  agent $i \notin S_0$ is constructed as follows with agent $i$ as the root node.  
Let $e^k_{i, j_k}$ be the live edge of agent $i$ in layer $k$, $k = 1, \ldots, m$.  Then the children of the root node are agents $j_1, j_2,..., j_m$, and the root node is connected to each child with the live edge in the corresponding layer. The tree is constructed recursively in this way for each child that itself has at least one child.  Any agent in the network may appear multiple times as a node in the tree. 
\end{definition}

Fig.~\ref{fig:ex_multi3} provides an example of a three-layer multiplex network with five agents. The network has only one possible multiplex selection of live edges, given in Fig.~\ref{fig:ex_live_select}. Fig.~\ref{fig:ex_live_tree} shows the corresponding live-edge tree associated with agent 5.  

\begin{figure}
    \centering
    \includegraphics[width=2.5in]{./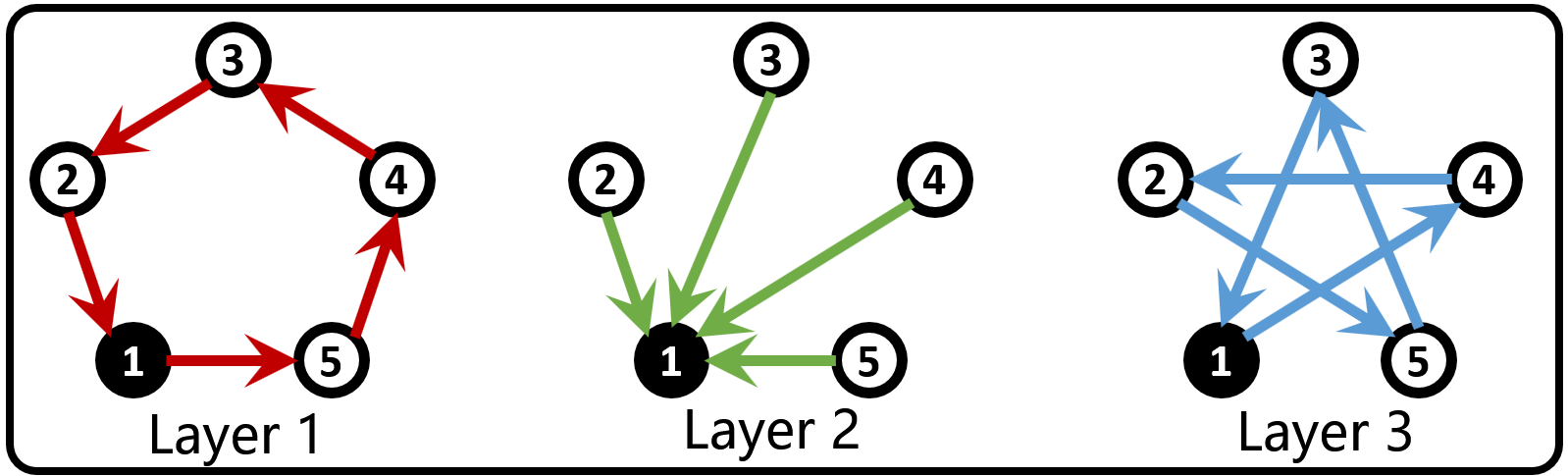}
    \caption{An example of a three-layer multiplex network with five agents. Agent 1 is the seed, which is denoted by the black circle.}
    \label{fig:ex_multi3}
\end{figure}
\begin{figure}
    \centering
    \includegraphics[width=2.5in]{./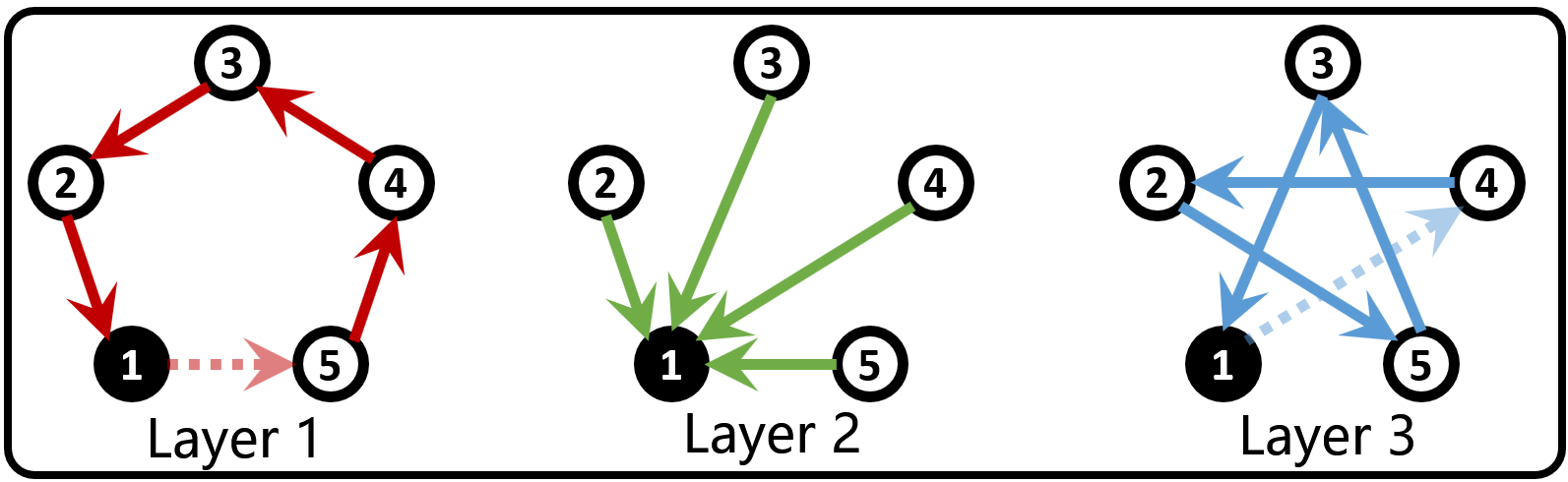}
    \caption{The unique multiplex selection of live edges for the network in Fig.~\ref{fig:ex_multi3}. 
    }
    \label{fig:ex_live_select}
\end{figure}
\begin{figure}
    \centering
    \includegraphics[width=2.1in]{./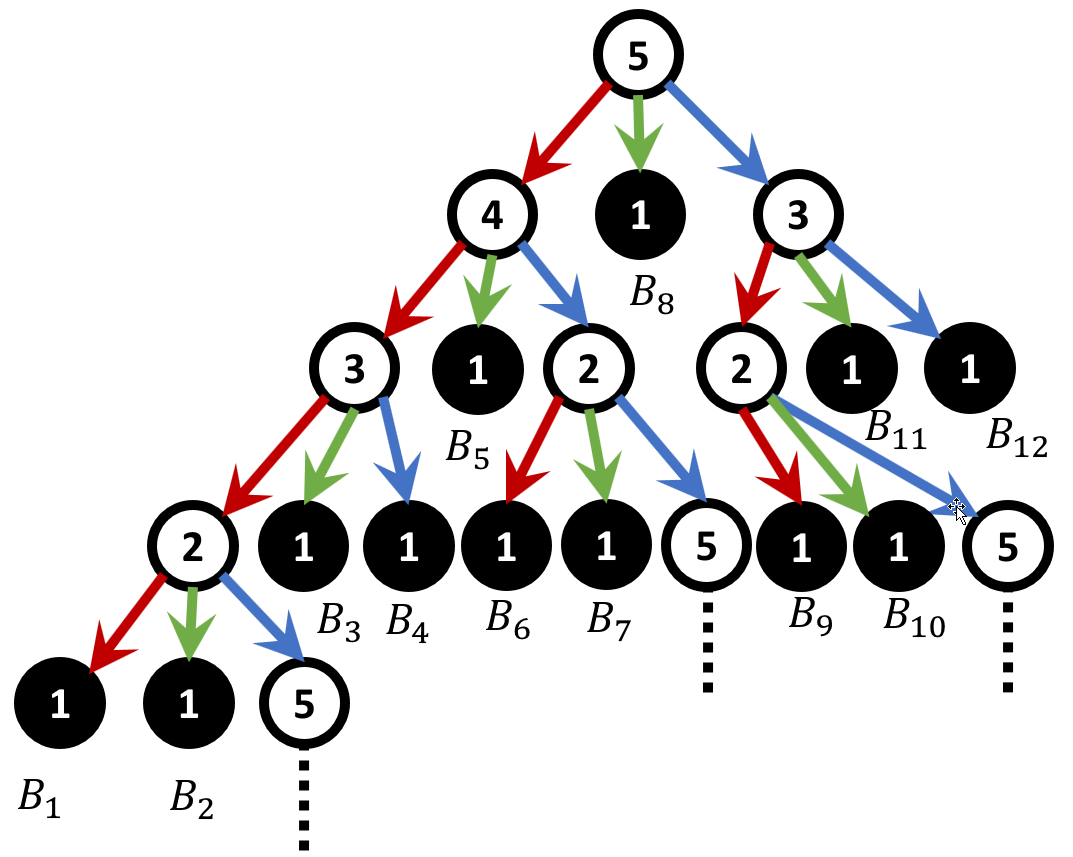}
    \caption{The live-edge tree associated with agent 5 for the example three-layer multiplex network of Fig.~\ref{fig:ex_multi3} and the unique  selection of live edges of Fig.~\ref{fig:ex_live_select}.    $\mathcal{B}_5 = \{B_1, B_2, ..., B_{12}\}$ is the set of distinct branches that end with a seed. }
    \label{fig:ex_live_tree}
\end{figure}
We next define reachability from $S_0$ of an unseeded agent in a multiplex network under a sequence of  protocols $\mathcal{U}$. 

\begin{definition}[$\mathcal{U}$-Reachability]
\label{def:reachability}
    Consider multiplex network $\mathcal{G}$ with seed set $S_0$ and multiplex selection of live edges $l \in L$. Let $T^l_{i}$ be the live-edge tree  associated with agent $i \notin S_0$. Suppose there are $b$ distinct branches in $T^l_{i}$ indexed by $\beta = 1, \ldots, b$ and of the form: $B_\beta = (i, e_{i, i_1}^{k_0}, i_1, e_{i_1,i_2}^{k_1}, i_2, ..., i_s)$,  where $i_j \in V$, $j = 1, \dots, s$, $i_s \in S_0$, and each agent in $V$ appears at most once in $B_\beta$. %i.e., $i_{j_1} \neq i_{j_2}$ if $j_1 \neq j_2$. 
    We call each $B_\beta$ a {\em distinct branch that ends in a seed}. 
   Denote the set of these branches as $\mathcal{B}^l_{i} = \{B_1, B_2, ..., B_b\}$. For any subset $\hat{\mathcal{B}} \subseteq \mathcal{B}^l_{i}$, let the set of agents in  $\hat{\mathcal{B}}$ be $\hat{V}$ and  the set of edges in $\hat{\mathcal{B}}$ be  $\hat{E}$.

    Given a sequence of protocols $\mathcal{U}$, we say that branch subset $\hat{\mathcal{B}}\subseteq \mathcal{B}^l_{i}$ is {\em $\mathcal{U}$-feasible for $i$} if $\hat{\mathcal{B}}\neq \varnothing$ and for every $\hat{i} \in \hat{V} \setminus S_0$ for which $u_{\hat{i}} = \mathrm{AND}$, all of $\hat{i}$'s live edges belong to $\hat{E}$. Then, $i$ is {\em $\mathcal{U}$-reachable from $S_0$ by the selection of live edges $l$ with probability $q_l$} if there exists at least one $\hat{\mathcal{B}}\subseteq \mathcal{B}^l_{i}$ that is $\mathcal{U}$-feasible for $i$. Let $L^{\mathcal{U}}_{iS_0} \subseteq L$ be the set of all selections of live edges by which $i$ is $\mathcal{U}$-reachable from $S_0$.  Then,  $i$ is {\em $\mathcal{U}$-reachable from $S_0$ with probability $r^{\mathcal{U}}_{iS_0}$}, where $r^{\mathcal{U}}_{iS_0} = \sum_{l \in L^{\mathcal{U}}_{iS_0}} q_l$. $\qed$
\end{definition}
    % \vsnote{The above paragraph appears incomplete. Should it also state that if an agent $\hat i$ chooses OR protocol, then its live edge in at least one of the layers belong to $\hat E$?}
    % \dznote{THe above paragraph is sufficient for the definition. The first sentence defines feasibility and the second sentence defines reachability-$\mathcal{U}$. Your suggestions is satisfied automatically in any circumstance. If an agent $\hat i$ chooses OR protocol and $\hat i$ is not a seed, then since $\hat{\mathcal{P}}$ is a set of ``complete" finite branches, there must be at least one live-edge of $\hat i$ in $\hat E$, regardless of the Protocol of $\hat i$. }

% \dznote{This definition doesn't require any restriction on agents choosing Protocol OR. }

    % Denote the set of finite branches that end with a seed in the live-edge tree as $\mathcal{P} = \{P_1, P_2, ..., P_l\}$, where $l$ is the number of finite branches in the live-edge tree $\mathcal{T}_0$ and $P_r, 1 \leq r \leq l$ denotes a finite branch with $P_r = (i_0, i_1, i_2, ..., i_s)$, where $i_j \in V, 1 \leq j \leq s$ and $i_s \in S_0$. Denote a subset $\hat{\mathcal{P}} \subset \mathcal{P}$. Denote the set of agents that appear in $\hat{\mathcal{P}}$ as $\hat{V}$ and the edges that appear in $\hat{\mathcal{P}}$ as $\hat{E}$.

\begin{remark}
The condition for $\mathcal{U}$-feasibility for $i$ of a branch subset $\hat{\mathcal{B}} \subseteq \mathcal{B}^l_i$ does not make explicit a condition on any $\hat i \in \hat V\setminus S_0$ for which $u_{\hat{i}} = \mathrm{OR}$. This follows since for any such agent $\hat{i}$, the condition is that at least one of its live edges must be in $\hat{E}$ and this is always true by definition. 
\end{remark}

% \nl{Desmond, I tried to guess what you meant for Remark 1.  It wasn't clear since you didn't clarify what the condition would be for an agent that uses OR.}

To illustrate $\mathcal{U}$-reachability, consider the live-edge tree associated with agent 5 in Fig.~\ref{fig:ex_live_tree} for the unique selection of live edges in Fig.~\ref{fig:ex_live_select} for the multiplex network of Fig.~\ref{fig:ex_multi3} with seed set $S_0 = \{ 1 \}$.    Because the selection of live edges in Fig.~\ref{fig:ex_live_select} is unique, it is chosen with probability $q=1$.  Therefore, agent $i \notin S_0$ is $\mathcal{U}$-reachable from $S_0$ with probability 1 if there exists at least one $\hat{\mathcal{B}} \subseteq  \mathcal{B}_i$ that is $\mathcal{U}$-feasible for $i$.
For agent 5, there are 12 distinct branches that end in a seed, as shown in Fig.~\ref{fig:ex_live_tree}; thus, $\mathcal{B}_5 = \{B_1, B_2, ..., B_{12}\}$. For example, $B_8 = (5, e_{5,1}^2, 1)$.

We compute $\mathcal{U}$-reachability from $S_0$ for agent 5 for each the following three sequences of protocols used by the five agents in the three-layer multiplex network:
\begin{align}
    \mathcal{U}_1 &= (\text{OR},\text{AND},\text{AND},\text{AND},\text{OR}) \\
    \mathcal{U}_2 &= (\text{OR},\text{OR},\text{AND},\text{AND},\text{AND}) \\
    \mathcal{U}_3 &= (\text{OR},\text{AND},\text{AND},\text{AND},\text{AND}).
\end{align}
\begin{enumerate}
    \item Let $\mathcal{U} = \mathcal{U}_1$.  Consider $\hat{\mathcal{B}}=\{B_8\} \subset \mathcal{B}_5$. Then, $\hat{V} = \{1,5\}$ and $\hat{E} = \{e^2_{5,1}\}$.  Since 5 is the only unseeded node in $\hat{V}$ and $u_5 = \mathrm{OR}$, $\hat{\mathcal{B}}$ is $\mathcal{U}$-feasible for 5.  Thus, agent 5 is $\mathcal{U}$-reachable from $S_0$ with probability 1.
    
    \item Let $\mathcal{U} = \mathcal{U}_2$.  Consider $\hat{\mathcal{B}}=\mathcal{B}_5$.  Then, $\hat{V} = \{1,2,3,4,5\}$.  The unseeded nodes $j \in \hat{V}$ for which $u_j = \mathrm{AND}$ are $j = 3,4,5$. From Fig.~\ref{fig:ex_live_tree}, observe that all the live edges of nodes 3, 4, and 5, belong to $\hat{E}$, the edge set of $\hat{\mathcal{B}}=\mathcal{B}_5$.  Thus, $\hat{\mathcal{B}}$ is $\mathcal{U}$-feasible for 5, and agent 5 is $\mathcal{U}$-reachable from $S_0$ with probability 1.
    \item Let $\mathcal{U} = \mathcal{U}_3$. In this case there is no $\mathcal{U}$-feasible subset $\hat{\mathcal{B}} \subseteq \mathcal{B}_5$, since $u_2 = \mathrm{AND}$ and agent 2 has a live edge $e^3_{2,5}$, which is not in the edge set of any branch in $\mathcal{B}_5$.  Thus, agent 5 is not $\mathcal{U}$-reachable from $S_0$.
    
\end{enumerate}

In the next section we prove the equivalence of the probability that agent $i$ is $\mathcal{U}$-reachable from $S_0$ for the multiplex LEM and the probability that agent $i$ is active at steady state for the multiplex LTM with seed set $S_0$. For our example, this implies that under $\mathcal{U}_1$ or $\mathcal{U}_2$, agent 5 will become active at steady state with probability 1 and under  $\mathcal{U}_3$, agent 5 will remain inactive at steady state. 

\section{Equivalence of LTM and LEM}

The monoplex LEM was introduced in \cite{kempe2003maximizing} and proved to be equivalent to the monoplex LTM in the sense that the probabilities of agents being reachable from a set $S_0$ in the LEM are equal to the probabilities of agents being active at steady state given seed set $S_0$ in the LTM.   Computing these probabilities for the LTM is challenging because it requires solving over temporal iterations. However, leveraging the equivalence, the probability distributions can be computed without temporal iteration using the LEM as a static model. 

% after iterating over the set

% \nl{Desmond - the above is not a proof and it is just a restatement.  Replace with a proof.  It can be just one line that explains why it can't take more than $n$ iterations.  Also, you need to be consistent in nomenclature.  You've used iteration (not step) until now. So either continue with iteration or go back and change `iteration' to `step' everywhere.}

% \vsnote{Perhaps a lemma can be added to show multiplex LTM converges in at most diameter of graph steps. }
% \dznote{Lemma added.}

The equivalence is recalled in Section~\ref{sec:monoplex_equiv} for monoplex networks and proved in Section~\ref{sec:multiplex_equiv} for multiplex networks.

\subsection{Equivalence for Monoplex Networks}
\label{sec:monoplex_equiv}

The LTM and LEM were proved to be equivalent in   \cite{kempe2003maximizing} in the following sense.
    For a given monoplex network $G$ with seed set $S_0$, the probabilities of the following two events for arbitrary agent $i \notin S_0$ are the same: 
    \begin{enumerate}
        \item{$i$ is active at steady state %by running  
        for the LTM with random thresholds and initial active set $S_0$;} 
        \item{$i$ is reachable from set $S_0$ under the random selection of live edges in the LEM.} 
    \end{enumerate}

% \begin{proof}
% We prove by mathematical induction.
% In the LTM, if agent $v$ has not become active at the end of iteration $t$, then the probability that it becomes active in iteration $t+1$ is $P(X|Y)$, where event X is ``sum of weights of $N_v$ in set $S_t$ exceeds threshold $\mu_v$" and event Y is ``sum of weights of $N_v$ in set $S_{t-1}$ does not exceed threshold $\mu_v$". $N_v$ is the set of in-neighbors of agent $v$ in the network. We have
% $$
% P(X|Y)=\frac{P(X,Y)}{P(Y)}=\frac{\sum_{u\in S_t \setminus S_{t-1}}w'_{u,v}}{1-\sum_{u\in S_{t-1}}w'_{u,v}} .
% $$
% In the LEM, if agent $v$ has not been determined to be reachable by the end of iteration $t$, then the probability that it is reachable by the end of iteration $t+1$ is $P(X'|Y')$, where event $X'$ is ``$v$'s live edge comes from $S_t'$", and event $Y'$ is ``$v$'s live edge does not come from $S_{t-1}'$". We have
% $$
% P(X'|Y')=\frac{P(X',Y')}{P(Y')}=\frac{\sum_{u\in S'_t \setminus S'_{t-1}}w'_{u,v}}{1-\sum_{u\in S'_{t-1}}w'_{u,v}}.
% $$
% In the base step, we have $S_0=S_0$. In the induction step, we assume $S_t=S'_t$ and $S_{t-1}=S'_{t-1}$, so that $P(X|Y)=P(X'|Y')$. Thus, by induction over the iterations, we see that the LEM produces the same distribution as the LTM.
% \end{proof}

\subsection{Equivalence for Multiplex Networks}
\label{sec:multiplex_equiv}
We generalize the equivalence of LTM and LEM to multiplex networks in this section. First, we prove the following lemma that infers an agent's $\mathcal{U}$-reachability from the $\mathcal{U}$-reachability of its children in the live-edge tree. We then leverage this lemma to prove the equivalence in Theorem~\ref{Pequivalent}. 
%\nl{I rewrote the proof to Lemma 2 using the updated notation and terminology.}
\begin{lemma}
    \label{lemma_0}
    Given a multiplex network $\mathcal{G}$ with seed set $S_0$, multiplex selection of live edges $l\in L$ and sequence of  protocols $\mathcal{U}$, consider agent $i\notin S_0$ and its associated live-edge tree $T^l_{i}$. Assume $i$'s live edge in layer $k$ connects to agent $i_1^k$, $k= 1, \ldots, m$. Then the $\mathcal{U}$-reachability of $i$ from $S_0$ by  selection $l$ can be inferred from the reachability of its children $i_1^k$ and its protocol $u_{i}$ as follows:
    \begin{enumerate}
        \item Let $u_{i}=\text{OR}$. Then, $i$ is $\mathcal{U}$-reachable from $S_0$ by selection of live edges $l$ if and only if at least one child $i_1^k$ is $\mathcal{U}$-reachable from $S_0$  by selection of live edges $l$.
        \item Let $u_{i}=\text{AND}$.  Then, $i$ is $\mathcal{U}$-reachable from $S_0$ by selection of live edges $l$ if and only if every child $i_1^k$ is $\mathcal{U}$-reachable from $S_0$  by selection of live edges $l$.
        %then it is reachable by the live-edge selection $l$ under reachability-$\mathcal{U}$ from $S_0$ if and only if all the $i_1^k$s are reachable from $S_0$ by the live-edge selection $l$ under reachability-$\mathcal{U}$.
  \end{enumerate}
\end{lemma}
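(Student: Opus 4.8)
The plan is to exploit the recursive structure of the live-edge tree. Fix the selection of live edges $l$ and write $T_v := T^l_v$. The first structural fact I would establish is that the subtree of $T_i$ hanging below a non-seed child $i_1^k$ of the root is exactly $T_{i_1^k}$ --- both are the recursive unrolling of the live-edge graph from $i_1^k$ --- and, more generally, the subtree rooted at any occurrence of an agent $v$ inside $T_i$ is a copy of $T_v$; when $i_1^k \in S_0$ this subtree is a single leaf. Alongside this I would record two elementary closure properties of $\mathcal{U}$-feasibility, both read \emph{per tree node}: \emph{(truncation)} if a branch collection is $\mathcal{U}$-feasible for $v$ and $\nu$ is a node occurring in it, then the branches through $\nu$, truncated to start at $\nu$, form a collection that is $\mathcal{U}$-feasible for the agent at $\nu$ --- every truncated branch still ends at a seed, and every AND obligation at or below $\nu$ is inherited because any branch that covers such an edge must pass through $\nu$; and \emph{(grafting)} prepending a root edge $(i, e^k_{i,i_1^k})$ to every branch of a collection that is $\mathcal{U}$-feasible for $i_1^k$ and that does not contain the agent $i$ yields a genuine collection of branches of $T_i$ (each still a simple path ending in a seed), $\mathcal{U}$-feasible for $i$ except possibly for the single fresh obligation that $i$'s own live edges be covered.

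With these in hand the two cases are short. For Protocol OR: $(\Rightarrow)$ take any branch of a witness for $i$; its first edge lands in some child $i_1^{k^\ast}$, and if $i_1^{k^\ast}\notin S_0$ truncation gives a witness for $i_1^{k^\ast}$, whereas if $i_1^{k^\ast}\in S_0$ the child is a seed. $(\Leftarrow)$ if the reachable child is a seed, the single branch $(i, e^{k^\ast}_{i,i_1^{k^\ast}}, i_1^{k^\ast})$ is already $\mathcal{U}$-feasible for $i$ since $u_i=\mathrm{OR}$ triggers no AND condition; if it is an unseeded reachable agent with witness $\hat{\mathcal{B}}'$, then either $\hat{\mathcal{B}}'$ does not contain $i$ and grafting produces a witness for $i$ (the fresh obligation is vacuous for an OR node), or $\hat{\mathcal{B}}'$ contains an occurrence $\nu$ of $i$ and truncation at $\nu$ already yields a witness for $i$. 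Protocol AND is the mirror image: $(\Rightarrow)$ because $u_i=\mathrm{AND}$ forces all $m$ live edges of $i$ into the witness, there is a branch through every child, and truncating gives a witness for each unseeded child (seeds being reachable by convention). $(\Leftarrow)$ collect witnesses $\hat{\mathcal{B}}'_k$ for the children (a single-leaf branch when $i_1^k\in S_0$); if some $\hat{\mathcal{B}}'_k$ contains $i$, truncate and stop; otherwise graft $(i, e^k_{i,i_1^k})$ onto each and take the union over $k$ --- now all $m$ live edges of $i$ appear, discharging $i$'s AND obligation, the obligations below are inherited, and every branch stays simple because no $\hat{\mathcal{B}}'_k$ contains $i$.

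The step I expect to be the main obstacle is exactly why this is not a one-line ``prepend and strip'' argument: because the live-edge graph may contain directed cycles, $T_i$ --- though it has only finitely many simple root-to-seed branches --- can contain the root agent $i$ again deep inside, so a witness for a child is not literally a witness for $i$ after prepending. The resolution used above is that such a reappearance is harmless, indeed helpful: the subtree beneath that occurrence is a copy of $T_i$ whose branches certify $i$ outright. A second point to get right at the outset is that $\mathcal{U}$-feasibility in Definition~\ref{def:reachability} must be applied to tree nodes rather than to agents --- an AND agent occurring at two places in $T_i$ must have the full set of child edges of \emph{each} occurrence represented --- since this is what makes truncation and grafting clean and is the reading under which Lemma~\ref{lemma_0} (and the worked examples) hold.
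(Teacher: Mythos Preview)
Your argument follows the same blueprint as the paper's: prepend the root edge to a child's feasible branch set to build one for $i$ (your ``grafting''), and strip the first edge from $i$'s feasible set to obtain one for a child (your ``truncation''). You go further than the paper in one respect: you explicitly treat the case where $i$ reoccurs inside a child's witness---the paper silently assumes this does not happen when it writes $\hat V_{k0}=\hat V_k\cup\{i\}$ and declares the prepended path a branch of $T^l_i$---and your truncate-at-the-reoccurrence trick is a clean way to close that gap.

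One point deserves care: your ``per tree node'' reading of $\mathcal U$-feasibility is not what Definition~\ref{def:reachability} literally says. There $\hat V$ is a set of \emph{agents} and $\hat E$ a set of network edges, so the AND condition is per agent, not per occurrence. Under the paper's reading, your truncation justification (``any branch that covers such an edge must pass through $\nu$'') can fail when the same AND agent also sits at some tree node $\nu'$ not below $\nu$ and the required live edge is supplied only by a branch through $\nu'$. The paper's own proof is equally informal here---it simply asserts that the stripped set $\hat{\mathcal B}_k$ is $\mathcal U$-feasible for $i_1^k$ without checking that every AND obligation survives the stripping---so this is a shared soft spot rather than a flaw peculiar to your write-up. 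If you want full rigor you should either prove that the two readings yield the same reachable set, or run the truncation step directly under the per-agent definition.
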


% \vsnote{Lot of repetition in the 1) and 2) below. There should be a way to consolidate the discussion.}
\begin{proof}
    %Given an initial active set $S_0$ and a selection of live edges $l$, consider the live-edge tree $T^l_{i}$ associated with $i$ and live-edge tree 
    Let $T^l_{i_1^k}$ be the corresponding live-edge tree associated with agent $i$'s child $i_1^k$ for $k=1, \ldots, m$. 
 %   \begin{enumerate}
%    \item 
    
    1) Let $u_{i}=\text{OR}$ and suppose $i_1^k$ is $\mathcal{U}$-reachable from $S_0$ by selection $l$ for some $k$. By Definition~\ref{def:reachability}, the set $\mathcal{B}^l_{i_1^k}$ of distinct branches that end with a seed in $T^l_{i_1^k}$ is nonempty and there exists a  subset $\hat{\mathcal{B}}_k \subseteq \mathcal{B}^l_{i_1^k}$ that is $\mathcal{U}$-feasible for $i_1^k$ such that for every $\hat{i} \in \hat{V}_k \setminus S_0$ for which $u_{\hat{i}} = \mathrm{AND}$, all of $\hat{i}$'s live edges belong to $\hat{E}_k$, where  $\hat{V}_k$ and $\hat{E}_k$ are the sets of agents and edges in $\hat{\mathcal{B}}_k$, respectively.

    For every branch $B^k_r=(i_1^k, e_{i_1^k,i_2}^{k'}, i_{2}, ..., i_{s}) \in\hat{\mathcal{B}}_k$, there exists a branch $B^{k0}_r= (i, e_{i,i_1^k}^k, i_1^k, e_{i_1^k,i_2}^{k'}, i_2,..., i_s)$ in $T^l_{i}$. Let $\hat{\mathcal{B}}_{k0} \subseteq \mathcal{B}^l_i$ be the set of all branches in $T^l_{i}$ that correspond to branches in $\hat{\mathcal{B}}_k$. Then, $\hat{V}_{k0}=\hat{V}_k \cup \{i\}$ is the set of agents in $\hat{\mathcal{B}}_{k0}$ and $\hat{E}_{k0}=\hat{E}_k \cup \{e_{i,i_1^k}^k\}$ is the set of edges in $\hat{\mathcal{B}}_{k0}$. Thus, since $u_i = \mathrm{OR}$ and $\hat{\mathcal{B}}_k$ is $\mathcal{U}$-feasible for $i_1^k$, by Definition~\ref{def:reachability}, $\hat{\mathcal{B}}_{k0}$ must  be $\mathcal{U}$-feasible for $i$. Agent $i$ is therefore $\mathcal{U}$-reachable from $S_0$  by selection $l$.  This proves the ``if'' part of the statement.
    
    %Since $i$ uses protocol OR, we know for every unseeded agent $\hat{i}$ in $\hat{V}$ that chooses Protocol AND,  $\hat{i}$'s live edges belong to $\hat{E}_0$. By definition, $\hat{\mathcal{B}}_{k0}$ is feasible and thus $i_0$ is reachable from $S_0$. We prove the ``if'' part.

    If no child $i_1^k$ is $\mathcal{U}$-reachable from $S_0$ by selection $l$, then there exists no nonempty $\hat{\mathcal{B}}_k$ in $T^l_{i_k}$ that is $\mathcal{U}$-feasible for $i^k_1$. This implies there is no nonempty $\hat{\mathcal{B}}_{k0}$ in $T^l_{i}$ that is $\mathcal{U}$-feasible for $i$. Therefore, agent $i$ cannot be $\mathcal{U}$-reachable from $S_0$  by selection $l$. This proves the ``only if'' part of the statement.
    
   % \item 
    2) Let $u_{i}=\text{AND}$ and suppose $i_1^k$ is $\mathcal{U}$-reachable from $S_0$ by selection $l$, for every $k=1, \ldots, m$.  By Definition~\ref{def:reachability}, for every $k$, there exists a  subset $\hat{\mathcal{B}}_k \subseteq \mathcal{B}^l_{i_1^k}$ in $T^l_{i_1^k}$ that is $\mathcal{U}$-feasible for $i_1^k$.  For every $k$ let $\hat{\mathcal{B}}_{k0} \subseteq \mathcal{B}^l_i$ be the set of all branches in $T^l_{i}$ that correspond to branches in $\hat{\mathcal{B}}_k$ as defined in the proof of 1).  Let $\hat{\mathcal{B}}_0 = \cup_{k=1}^m \hat{\mathcal{B}}_{k0} \subseteq \mathcal{B}^l_i$ with agent set $\hat{V}_0$ and edge set $\hat{E}_0$.  By construction, $i \in \hat{V}_0$ and all of agent $i$'s live edges belong to $\hat{E}_0$.  It follows  that $\hat{\mathcal{B}}_0$ is $\mathcal{U}$-feasible for $i$ and thus agent $i$ is $\mathcal{U}$-reachable from $S_0$ by selection $l$.  This proves the ``if'' part of the statement. 
    
    %then the set of distinct branches that end with a seed $\mathcal{B}_k$ in $T^l_{i_k}$ is not empty and there exists a feasible subset  $\hat{\mathcal{B}}_k \subset \mathcal{B}_k, \forall k$ such that for every agent $j$ that uses Protocol AND in $\hat{V}_k$, $j$'s live edges belong to $\hat{E}_k$, where $\hat{V}_k$ is the set of agents that appear in $\hat{\mathcal{B}}_k$ and $\hat{E}_k$ is the set of edges that appear in $\hat{\mathcal{B}}_k$. For every branch $B^k_r=(i_1^k, e_{i_1,i_2}^{k'}, i_{2}, ..., i_{s})$ in  $\hat{\mathcal{B}}_k$, there exists a branch  $B^{k0}_r= (i_0, e_{i_0,i_1^k}^k, i_1^k, e_{i_1^k,i_2}^{k'} i_2,..., i_s)$ in $T^l_{i_0}$. Denote the set of all such corresponding branch as $\hat{\mathcal{B}}_{k0}$. The set of agents that appear in $\hat{\mathcal{B}}_{k0}$ is $\hat{V}_{k0}=\hat{V}_k \cup \{i_0\}$ and the set of edges that appear in $\hat{\mathcal{B}}_{k0}$ is $\hat{E}_{k0}=\hat{E}_k \cup \{e_{i_0,i_1^k}^k\}$. 
   % Consider the nonempty set $\hat{B_0}= \cup_{k=1}^m \hat{B}_{k0}$, which is feasible in $T^l_{i_0}$. Then, $i_0$ is reachable and we prove the ``if'' part.

If there is one child $i_1^k$ that is not $\mathcal{U}$-reachable from $S_0$ by selection $l$, then there exists no set $\hat{\mathcal{B}}_k$ in $T^l_{i_k}$ that is $\mathcal{U}$-feasible for $i^k_1$.
Suppose there is a set $\hat{\mathcal{B}}_0\subseteq \mathcal{B}^l_i$ in $T^l_{i}$ that is $\mathcal{U}$-feasible for $i$. Since $u_i=\mathrm{AND}$, by Definition~\ref{def:reachability}, it follows that edge $e_{i,i_1^k}^k \in \hat{E}_0$. For every branch in $\hat{\mathcal{B}}_0$ that starts with $i$ and edge $e_{i,i_1^k}^k$, i.e., $B^{k0}_r= (i, e_{i,i_1^k}^k, i_1^k, e_{i_1^k,i_2}^{k'} i_2,..., i_s)$, we denote the set of all corresponding branches $B^k_r=(i_1^k, e_{i_1,i_2}^{k'}, i_{2}, ..., i_{s})$ as $\hat{\mathcal{B}}_k \subseteq \mathcal{B}^l_{i_1^k}$. It follows that  $\hat{\mathcal{B}}_k$ is  $\mathcal{U}$-feasible  for $i_1^k$ in $T^l_{i_1^k}$ and $i_1^k$ is reachable from $S_0$.  This is a contradiction. Thus, there is no set $\hat{\mathcal{B}}_0$ that is $\mathcal{U}$-feasible for $i$ and agent $i$ cannot be $\mathcal{U}$-reachable from $S_0$ by selection $l$. This proves the ``only if'' part of the statement. 
 %   \end{enumerate}
\end{proof}

While Definition \ref{def:reachability} uses the LEM to define $\mathcal{U}$-reachability in a static way, the LEM can also be used to reveal the $\mathcal{U}$-reachability of agents as an iterative process over time \cite{kempe2003maximizing} %with Lemma \ref{lemma_0} 
as follows. 
First, using Lemma~\ref{lemma_0}, %From an initial set $S_0'$, 
determine the $\mathcal{U}$-reachability of the agents with at least one edge coming from initial set $S_0' = S_0$. %by Lemma \ref{lemma_0}. 
If an agent is determined to be $\mathcal{U}$-reachable from $S_0'$, add it to $S_0'$ to get a new reachable set $S_1'$. In the next iteration, follow the same procedure and get a sequence of reachable sets $S_0', S_1', S_2',...$. The process ends at iteration $t$ if $S_t'=S_{t-1}'$, where $S_t'$ is the set of agents that are $\mathcal{U}$-reachable from $S_0$. The mapping between static and temporal determinations of reachability for the LEM is the key to proving the equivalence of the multiplex LTM and multiplex LEM.  We also make use of the following definitions. % that we will use to prove the equivalence of LTM and LEM. 

\begin{definition}[LTM-related events]
    We define $f^k_i(t)$ ($g^k_i(t)$) to be the event that the sum of weights of active out-neighbors of $i$ in layer $k$ does (does not) exceed $\mu_i^k$ at $t$:  
    $f^k_i(t) = \{\mu_i^k<\sum_{j\in N_i^k \cap S_{t}}w^k_{i,j}\}$ and $g^k_i(t)=\{\mu_i^k\geq \sum_{j\in N_i^k \cap S_{t}}w^k_{i,j}\}$. Let
    \[ 
    X_k = f^k_i(t), \quad Y_k= g^k_i(t-1), \quad Z_k = f^k_i(t-1).
    \] %for $k=1, 2, ..., m$.
\end{definition}
\begin{definition}[LTM-related probabilities]
    For agent $i$ that is inactive at $t$,  we define the probability that $i$ becomes active at $t+1$ as $\mathbb{P}_{i(\mathrm{OR})}^{t+1}$ if $i$ uses Protocol OR, and as $\mathbb{P}_{i(\mathrm{AND})}^{t+1}$ if $i$ uses Protocol AND.
\end{definition}

% Next, we define some events regarding the LEM. We denote $X_k'$ as the event that $i$'s live edge in layer $k$ connects to $S_t'$ in iteration $t$, $Y_k'$ as the event that $i$'s live edge in layer $k$ has not been determined to connect to $S_{t-1}'$ in iteration $t-1$, and $Z_k'$ as the event that $i$'s live edge in layer $k$ has determined to connect to $S_{t-1}'$ in iteration $t-1$. 
\begin{definition}[LEM-related events]
    We define $f'^k_i(t)$ ($g'^k_i(t)$) to be the event that agent $i$'s live edge in layer $k$ does (does not) connect to the reachable set $S_t'$ at $t$. Let 
    \[
    X_k'=f'^k_i(t), \quad Y_k'=g'^k_i(t-1), \quad Z_k'=f'^k_i(t-1).
    \]
%    for $k = 1, 2, ..., m$. 
\end{definition}
\begin{definition}[LEM-related probabilities]
    Consider the LEM as an iterative process. If agent $i\notin S_t'$, then we define the probability that $i\in S_{t+1}'$ as $\mathbb{P}_{i(\mathrm{OR})}^{'t+1}$ if $i$ uses Protocol OR, and as $\mathbb{P}_{i(\mathrm{AND})}^{'t+1}$ if $i$ uses Protocol AND.
\end{definition}
We state the equivalence of multiplex LTM and multiplex LEM in the following theorem.
%\nl{This proof should be made more accessible.  A nice idea to at least introduce each step with a clear statement or even using headings with the "paragraph" command in latex of what you do in each step.}
\begin{theorem}
    \label{Pequivalent}
    For a multiplex network $\mathcal{G}$ with seed set $S_0$,  multiplex selection of live edges $l\in L$ and sequence of  protocols $\mathcal{U}$,
    the probabilities of the following two events regarding an arbitrary agent $i \notin S_0$ are the same:
    \begin{enumerate}
    \item $i$ is active at steady state %by running 
    for the multiplex LTM under $\mathcal{U}$ with random thresholds and initial active set $S_0$;
    %\nl{multiplex LTM under $\mathcal{U}$ and random thresholds for initial active set $S_0$};
    \item $i$ is $\mathcal{U}$-reachable from the set $S_0$ under random selection of live edges in the multiplex LEM.
    \end{enumerate}
\end{theorem}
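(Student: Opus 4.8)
The plan is to mimic the classical argument of Kempe et al.\ for the monoplex case, but to carry it out in a way that is compatible with the recursive, tree-based definition of $\mathcal{U}$-reachability and the heterogeneity of protocols. The first step is to recast both processes as iterative constructions on the same probability space. For the LEM, I will use the temporal reveal process described just before the theorem: starting from $S_0' = S_0$, at each iteration determine which inactive agents become $\mathcal{U}$-reachable using Lemma~\ref{lemma_0}, i.e.\ an OR-agent joins $S_{t+1}'$ iff at least one of its live-edge children is already in $S_t'$, and an AND-agent joins iff all $m$ of its live-edge children are in $S_t'$. For the LTM, the analogous recursion is $S_{t+1} = S_t \cup \{i : y_i(t+1) \ge \delta_i\}$. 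I will prove by induction on $t$ that, conditioned on appropriate partial information, the laws of $S_t$ and $S_t'$ coincide; since both processes converge in at most $n$ iterations (by the Lemma just above the theorem, and the analogous bound for the reveal process), equality of the steady-state laws follows, and marginalizing over $l$ gives the stated agreement of probabilities for a single agent $i$.

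The core of the induction is a one-step coupling argument, and this is where the LTM/LEM dictionary for a single agent $i$ must be made precise, which is exactly what the four auxiliary definitions preceding the theorem are set up to support. Consider an agent $i$ that is inactive at time $t$ (resp.\ $i \notin S_t'$). The key probabilistic fact, borrowed from the monoplex proof, is the following: conditioned on the history up to time $t$ and on the event $g_i^k(t-1)$ that $i$'s threshold in layer $k$ was \emph{not} exceeded at $t-1$ (equivalently, in the LEM, conditioned on $i$'s layer-$k$ live edge not pointing into $S_{t-1}'$), the conditional probability that $i$'s layer-$k$ input turns positive at $t$ — i.e.\ that the threshold is now crossed given it was not before — equals the conditional probability that $i$'s layer-$k$ live edge points into the newly added set $S_t' \setminus S_{t-1}'$. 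This is the standard "memoryless threshold $\Leftrightarrow$ random outgoing edge" equivalence of Kempe et al.\ applied layer by layer; because each $\mu_i^k$ and each layer-$k$ live edge of $i$ is drawn independently across layers, the joint law of the vector of layer inputs $(y_i^1(t),\dots,y_i^m(t))$ in the LTM matches the joint law of the indicators that $i$'s $m$ live edges point into $S_t'$ in the LEM. Then $i$ becomes active at $t$ in the LTM iff $\sum_k y_i^k(t) \ge m\delta_i$; for Protocol OR ($\delta_i = 1/m$) this means at least one layer input is positive, and for Protocol AND ($\delta_i = 1$) all $m$ are positive — which is exactly the OR/AND condition on live-edge children from Lemma~\ref{lemma_0}. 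So $\mathbb{P}_{i(\mathrm{OR})}^{t+1} = \mathbb{P}_{i(\mathrm{OR})}^{'t+1}$ and $\mathbb{P}_{i(\mathrm{AND})}^{t+1} = \mathbb{P}_{i(\mathrm{AND})}^{'t+1}$, which is the per-step, per-agent equivalence needed.

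The main obstacle, and the place where care is essential, is handling the \emph{conditioning} correctly across the induction, because unlike the monoplex case we must juggle AND-agents whose activation depends jointly on all $m$ of their layer thresholds, and we must be sure that exposing the live edges (or thresholds) of an agent in one iteration does not corrupt the conditional independence needed in later iterations. The standard fix is the "principle of deferred decisions": at each iteration we reveal only the live edge of an agent in a given layer at the moment that agent first has a chance to become reachable in that layer, and the event $g_i^k(t-1)$ precisely encodes that the layer-$k$ edge was revealed to land outside $S_{t-1}'$, leaving its conditional distribution uniform over the complement — which is what makes the one-step computation above valid. I would formalize this by showing that the sets $X_k, Y_k, Z_k$ and $X_k', Y_k', Z_k'$ satisfy $\mathbb{P}(X_k \mid Y_k) = \mathbb{P}(X_k' \mid Y_k')$ for each layer $k$ (a direct transcription of the monoplex equivalence of Section~\ref{sec:monoplex_equiv}, applied to layer $k$ in isolation), then use independence across layers to combine these into the OR- and AND-probabilities, and finally feed this into the induction on $t$. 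Once the one-step equivalence and the coupling of histories are in place, the inductive step closes routinely and the theorem follows by taking $t \to \infty$ and using finite-time convergence of both processes.
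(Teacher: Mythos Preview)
Your proposal is correct and follows essentially the same route as the paper: both proofs proceed by induction on $t$, invoke the monoplex identity $\mathbb{P}(X_k\mid Y_k)=\mathbb{P}(X_k'\mid Y_k')$ layer by layer, use independence of thresholds and live edges across layers to combine these into the per-step equalities $\mathbb{P}_{i(\mathrm{OR})}^{t+1}=\mathbb{P}_{i(\mathrm{OR})}^{'t+1}$ and $\mathbb{P}_{i(\mathrm{AND})}^{t+1}=\mathbb{P}_{i(\mathrm{AND})}^{'t+1}$, and appeal to Lemma~\ref{lemma_0} on the LEM side. The only cosmetic difference is that the paper makes the AND case explicit by summing over the $2^m-1$ mutually exclusive configurations of which layers already had their thresholds exceeded at $t-1$ (the $Y_k$ vs.\ $Z_k$ split), whereas you gesture at this via the deferred-decisions discussion; when you write it out, you will need that decomposition or an equivalent conditioning argument to make the ``joint law matches'' claim precise for AND agents.
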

\begin{proof}
    We prove by mathematical induction.
% \vsnote{Add verbal description of $X_k$, $Y_k$ and $Z_k$.}
% \dznote{Description added.}
    
    We use $\mathbb{P}(X_k|Y_k)=\mathbb{P}(X_k'|Y_k')$, which we have from~\cite{kempe2003maximizing}.
    %we will use it as a stepping stone in the proof. 
    We show i) $\mathbb{P}_{i(\mathrm{OR})}^{t+1} = \mathbb{P}_{i(\mathrm{OR})}^{'t+1}$ and ii) $\mathbb{P}_{i(\mathrm{AND})}^{t+1} = \mathbb{P}_{i(\mathrm{AND})}^{'t+1}$, so  by induction over the iterations, the probabilities of the two events in the statement of the theorem are the same.
    
    \noindent
    i) \textit{Proving} $\mathbb{P}_{i(\mathrm{OR})}^{t+1} = \mathbb{P}_{i(\mathrm{OR})}^{'t+1}$ \textit{(when $i$ uses Protocol OR)}

    In the LTM, $i$ being inactive at $t$ means none of $i$'s thresholds is exceeded at $t-1$ and $i$  being active at $t+1$ means $i$'s thresholds in at least one layer is exceeded at $t$. In this case, the probability that $\mu_i^k$ is exceeded at $t$  is $\mathbb{P}_k=\mathbb{P}(X_k|Y_1,Y_2, ..., Y_m)=\mathbb{P}(X_k|Y_k)$. The last equality holds because random variables $\mu_i^1$, $\mu_i^2$, ..., $\mu_i^m$ are independent. Then $\mathbb{P}_{i(\mathrm{OR})}^{t+1}$ is the complement of the probability that its threshold in none of the layers are exceeded, i.e., $\mathbb{P}_{i(\mathrm{OR})}^{t+1}= 1-\prod_{k=1}^m(1-\mathbb{P}_k)$ 

    In the LEM, by Lemma \ref{lemma_0}, $i\notin S_t'$ means all of $i$'s live edges are not connected to $S_{t-1}'$ and $i\in S_{t+1}'$ means at least one live edge of $i$ is connected to $S_t'$. In this case, the probability that $i$'s live edge in layer $k$ connects to $S_t'$ is $\mathbb{P}_k'=\mathbb{P}(X_k'|Y_1',Y_2',...,Y_k')=\mathbb{P}(X_k'|Y_k')$. Then $\mathbb{P}_{i(\mathrm{OR})}^{'t+1}$ is the complement of the probability that none of $i$'s live edges connects to $S_t'$, i.e., $\mathbb{P}_{i(\mathrm{OR})}^{'t+1}=1-\prod_{k=1}^m(1-\mathbb{P}_k')$.
    
    From $\mathbb{P}(X_k|Y_k)=\mathbb{P}(X_k'|Y_k')$ we have $\mathbb{P}_k=\mathbb{P}_k', k=1,2,...,m$. So that we conclude that $\mathbb{P}_{i(\mathrm{OR})}^{t+1}=\mathbb{P}_{i(\mathrm{OR})}^{'t+1}$.
    
    \noindent
    ii) \textit{Proving} $\mathbb{P}_{i(\mathrm{AND})}^{t+1} = \mathbb{P}_{i(\mathrm{AND})}^{'t+1}$ \textit{(when $i$ uses Protocol AND)}
    
    In the LTM, $i$ being inactive at $t$ means at least one of its thresholds is not exceeded at $t-1$ and being active at $t+1$ means all of $i$'s thresholds are exceeded at $t$. In this case, there are $2^m-1$ possible events, with probabilities denoted as $\mathbb{P}_{m+1}, \ldots, \mathbb{P}_{2^m+m-1}$. We have that $\mathbb{P}_{m+1}=\mathbb{P}(X_1,X_2,...,X_m|Y_1,Y_2,...,Y_m)$. The other probabilities have a similar form but with one or more, but not all, of the $Y_k$ replaced by $Z_k$. Since $Y_k$ and $Z_k$ are mutually exclusive for all $k$, we have $\mathbb{P}_{i(\mathrm{AND})}^{t+1}=\sum_{l=m+1}^{2^m+m-1} \mathbb{P}_l$.

    In the LEM, by Lemma \ref{lemma_0}, $i\notin S_t'$ means at least one of $i$'s live edges are not connected to $S_{t-1}'$ and $i\in S_{t+1}'$ means all of $i$'s live edges are connected to $S_t'$. In this case, there are $2^m-1$ possible events, with probabilities denoted as $\mathbb{P}_{m+1}', \ldots, \mathbb{P}_{2^m+m-1}'$. We have that $\mathbb{P}_{m+1}'=\mathbb{P}(X_1',X_2', ..., X_m'|Y_1',Y_2', ..., Y_m')$. The other probabilities have similar form but with one or more, but not all, of the $Y_k'$ replaced by $Z_k'$. Since $Y_k'$ and $Z_k'$ are mutual exclusive for all $k$, we have $\mathbb{P}_{i(\mathrm{AND})}^{'t+1}=\sum_{l=m+1}^{2^m+m-1} \mathbb{P}_l'$.

    Since the thresholds are independent of one another, $\mathbb{P}_l$ can be separated into the product of $m$ terms, each of which involves events associated to one layer only: $\mathbb{P}_l=\prod_{k=1}^{m}\mathbb{P}_{0k}$, where $\mathbb{P}_{0k}$ is either $\mathbb{P}(X_k,Y_k)/\mathbb{P}(Y_k)=\mathbb{P}(X_k|Y_k)$ or $\mathbb{P}(X_k,Z_k)/\mathbb{P}(Z_k)=1$. Since the live edges that each agent uses are independent of one another, similar analysis applies to $\mathbb{P}_l'$: $\mathbb{P}_l'=\prod_{k=1}^{m}\mathbb{P}_{0k}'$, where $\mathbb{P}_{0k}'$ is either $\mathbb{P}(X_k',Y_k')/\mathbb{P}(Y_k')=\mathbb{P}(X_k'|Y_k')$ or $\mathbb{P}(X_k',Z_k')/\mathbb{P}(Z_k')=1$. It follows from $\mathbb{P}(X_k|Y_k)=\mathbb{P}(X_k'|Y_k')$ that $\mathbb{P}_l=\mathbb{P}_l', l=m+1,m+2,...,2^m+m-1$ and $\mathbb{P}_{i(\mathrm{AND})}^{t+1}=\mathbb{P}_{i(\mathrm{AND})}^{'t+1}$.
\end{proof}

% \begin{remark}
% Given the seed set and the sequence of protocols $\mathcal{U}$, computing the probability of $i$ being active in the LTM is the same as computing the probability of $i$ being $\mathcal{U}$-reachable from $S_0$, which is $r^{\mathcal{U}}_{iS_0} = \sum_{l \in L^{\mathcal{U}}_{iS_0}} q_l$.
% \end{remark}
%\dz{(I move the remark, previous stated here, into the proof of Theorem 2.)}
%\nl{Add a remark that clarifies now how to compute the probabilities using the notation defined in the previous section.}
Theorem \ref{Pequivalent} generalizes the equivalence of LTM and LEM to multiplex networks. Leveraging Theorem \ref{Pequivalent}, we can calculate an agent's influence, in terms of spreading information through the network, without needing to simulate the multiplex LTM.  %, and in the subsequent section we present algorithms to compute it.

% \textcolor{brown}{(notation: the notation for probability in the above prove is similar to the notation for branch in the previous section, shall we change all the notation from section 4 from P to B?)}

% \vsnote{Alternatively, you can make probability $\mathbb{P}$. I am fine with either of the notation. }
% \dznote{change all probabilities to $\mathbb{P}$.}

%   \begin{align*}
%     P_3&=P(X_1,X_2,Y_1,Y_2)/P(Y_1,Y_2)\\
%       &=P(X_1,Y_1)/P(Y_1)\times P(X_2,Y_2)/P(Y_2)\\
%       &=P(X_1',Y_1')/P(Y_1')\times P(X_2',Y_2')/P(Y_2')\\
%       &=P(X_1',X_2',Y_1',Y_2')/P(Y_1',Y_2')=P_3';
%   \end{align*}
%   \begin{align*}
%     P_4&=P(X_1,X_2,Z_1,Y_2)/P(Z_1,Y_2)\\
%       &=P(X_1,Z_1)/P(Z_1)\times P(X_2,Y_2)/P(Y_2)\\
%       &=1\times P(X_2,Y_2)/P(Y_2)\\
%       &=1 \times P(X_2',Y_2')/P(Y_2')\\
%       &= P(X_1',Z_1')/P(Z_1')\times P(X_2',Y_2')/P(Y_2')\\
%       &=P(X_1',X_2',Z_1',Y_2')/P(Z_1',Y_2')=P_4';
%   \end{align*}
%   \begin{align*}
%     P_5&=P(X_1,X_2,Y_1,Z_2)/P(Y_1,Z_2)\\
%       &=P(X_1,Y_1)/P(Y_1)\times P(X_2,Z_2)/P(Z_2)\\
%       &=P(X_1,Y_1)/P(Y_1)\times 1\\
%       &=P(X_1',Y_1')/P(Y_1') \times 1\\
%       &= P(X_1',Y_1')/P(Y_1')\times P(X_2',Y_2')/P(Z_2')\\
%       &=P(X_1',X_2',Y_1',Z_2')/P(Y_1',Z_2')=P_5'.
%   \end{align*}

\section{Computing Multiplex Influence Spread}
In this section we define multiplex influence spread and multiplex cascade centrality for the LTM.  We then derive and prove the validity of an algorithm to compute them.

%Influence spread and cascade centrality are recalled for monoplex networks in Section~\ref{sec:monocc} and defined for multiplex networks in Section~\ref{sec:multicc}. We define and prove the validity of an algorithm for computing 

%In this section, we define influence spread and cascade centrality.  We review the monoplex  case in Section~\ref{sec:monocc} and generalize to multiplex networks in Section~\ref{sec:multicc}.
%which measures an agent's spreading influence.

\subsection{Monoplex Influence Spread and Cascade Centrality} \label{sec:monocc}
The {\em monoplex influence spread of agents in $S_0$}, denoted $\sigma^{G}_{S_0}$,  is defined as the expected number of active agents at steady state for the monoplex LTM given the  network $G$ and  initial active set $S_0$~\cite{kempe2003maximizing}.
The {\em monoplex cascade centrality of agent $j$}, denoted $\mathcal{C}_j^{G}$, is the influence spread of agent $j$ defined  in~\cite{lim2015simple} as
$\mathcal{C}_j^{G} = \sigma^{G}_{j}$,
the expected number of active agents at steady state for the monoplex LTM given $G$ and $S_0 = \{j\}$. % only agent $j$ initially active. %$C_j$ measures the influence spread of  agent $j$.   %is the in In the case that $S_0 = \{j\}$ measure of influence spread for a single agent $j$ as the set $S_0$ is called {\em cascade centrality} $C_j$ of $j$ in~\cite{lim2015simple}.

% consider the influence maximization problem - given a parameter k, to find a
% k-agent set of maximum influence. Lim et al. \cite{lim2015simple} consider the
% influence maximization problem when $k=1$. Since it is the influence of an
% agent, they define it as a centrality measure of the agent. They call it
% cascade centrality, an indicator of the ability of an agent of causing cascade
% in the network. 

\subsection{Multiplex Influence Spread and Cascade Centrality} \label{sec:multicc}

Influence spread and cascade centrality are naturally generalized to the multiplex setting as  follows.

%{\em Multiplex influence spread} $\sigma(S_0, \mathcal{U})$ and {\em multiplex cascade centrality} $\mathcal{C}_j^{\mathcal{U}}$ can be defined analogously for a sequence of selected protocols $\mathcal{U}$ in the multiplex LTM.

\begin{definition}[Multiplex influence spread] \label{def:mis}
The {\em multiplex influence spread of agents in $S_0$}, denoted $\sigma^{\mathcal{G},\mathcal{U}}_{ S_0}$, is defined as the expected number of active agents at steady state for the multiplex LTM given the network $\mathcal{G}$, sequence of protocols $\mathcal{U}$, and initial active set $S_0$.  Let $\mathbb{E}^{\mathcal{G},\mathcal{U}}_{S_0}$ and $\mathbb{P}^{\mathcal{G},\mathcal{U}}_{S_0}$ be  expected value and probability, respectively, conditioned on $\mathcal{G},\mathcal{U},S_0$.  Then
\begin{equation}
    \label{influenceSpread}
    \sigma^{\mathcal{G},\mathcal{U}}_{ S_0}  = \mathbb{E}^{\mathcal{G},\mathcal{U}}_{S_0}\Big(\sum_{i=1}^n \bar x_i  \Big) 
%     = \sum_{i=1}^n \mathbb{E}^{\mathcal{G},\mathcal{U}}_{S_0}(\bar x_i)
    = \sum_{i=1}^n\mathbb{P}^{\mathcal{G},\mathcal{U}}_{S_0}(\bar x_i=1). 
\end{equation}
\end{definition}

\begin{definition}[Multiplex cascade centrality] \label{def:mcc}
    The {\em multiplex cascade centrality of agent $j$}, denoted $\mathcal{C}_j^{\mathcal{\mathcal{G},U}}$, is defined as
    \begin{equation}
    \label{CascadeCen}
    \mathcal{C}_j^{\mathcal{\mathcal{G},U}} = \sigma^{\mathcal{G},\mathcal{U}}_ {j}. 
    %= \sum_{i=1}^n\mathbb{P}^{\mathcal{G},\mathcal{U}}_{\{j\}}(\bar x_i=1).
\end{equation}
\end{definition}

When $\mathcal{U}  = (u, ..., u)$, we  replace  $\mathcal{U}$ with $u$ in the superscript.   For example, when $u = \text{OR}$, we write $\mathbb{P}^{\mathcal{G},\mathrm{OR}}_{S_0}(\bar x_i=1)$  and $\mathcal{C}_j^{\mathcal{G},\mathrm{OR}}$.  When $\mathcal{G}$ is understood we drop it from the superscript.

\subsection{Computing Multiplex Influence Spread and  Centrality}

%\subsection{The Direct Approach}
%The following algorithm serves as a way to accurately calculate multiplex cascade centrality by leveraging We can accurately compute 
%Multiplex cascade centrality can be  computed leveraging By 
We can directly compute multiplex influence spread and multiplex cascade centrality by computing probabilities of $\mathcal{U}$-reachability for the LEM, which is much easier than computing probabilities of agents being active at steady state for the LTM. We summarize in a corollary to Theorem~\ref{Pequivalent}.
\begin{corollary}
    Given multiplex network $\mathcal{G}$ and sequence of protocols $\mathcal{U}$, multiplex influence spread of agents in $S_0$ and multiplex cascade centrality of agent $j$ can be determined as
\begin{equation}
{\sigma}_{S_0}^{\mathcal{U}} = \sum_{i=1}^n r^{\mathcal{U}}_{iS_0}, \;\;\; 
\mathcal{C}_j^{\mathcal{U}} =    \sum_{i=1}^n r^{\mathcal{U}}_{ij}.
\label{corresult}
\end{equation}
%where $r^{\mathcal{U}}_{iS_0}$ and $r^{\mathcal{U}}_{ij}$ are the probabilities that agent $i$ is $\mathcal{U}$-reachable from $S_0$ and from $j$.
\end{corollary}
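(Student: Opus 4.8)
The plan is to derive the corollary directly from Theorem~\ref{Pequivalent} together with linearity of expectation, exactly paralleling the monoplex argument of~\cite{kempe2003maximizing}. First I would recall from Definition~\ref{def:mis} that $\sigma^{\mathcal{U}}_{S_0} = \sum_{i=1}^n \mathbb{P}^{\mathcal{G},\mathcal{U}}_{S_0}(\bar x_i = 1)$, which already writes the influence spread as a sum of marginal steady-state activation probabilities over the $n$ agents. The only work, then, is to identify each $\mathbb{P}^{\mathcal{G},\mathcal{U}}_{S_0}(\bar x_i = 1)$ with the corresponding $\mathcal{U}$-reachability probability $r^{\mathcal{U}}_{iS_0}$.

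For agents $i \in S_0$ the identification is trivial: a seed is active at $t=0$ and stays active, so $\mathbb{P}^{\mathcal{G},\mathcal{U}}_{S_0}(\bar x_i = 1) = 1$, and by the convention following Definition~\ref{def:reachability} (seeds block all outgoing edges, and reachability is defined for $i \notin S_0$) one takes $r^{\mathcal{U}}_{iS_0} = 1$ for $i \in S_0$; I would state this as a notational convention so the sum in~\eqref{corresult} is well-posed. For agents $i \notin S_0$, Theorem~\ref{Pequivalent} asserts precisely that the probability that $i$ is active at steady state for the multiplex LTM under $\mathcal{U}$ with seed set $S_0$ equals the probability that $i$ is $\mathcal{U}$-reachable from $S_0$ under the random selection of live edges, i.e. $\mathbb{P}^{\mathcal{G},\mathcal{U}}_{S_0}(\bar x_i = 1) = r^{\mathcal{U}}_{iS_0}$, where the latter is $\sum_{l \in L^{\mathcal{U}}_{iS_0}} q_l$ as in Definition~\ref{def:reachability}. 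Summing over all $i$ and substituting gives $\sigma^{\mathcal{U}}_{S_0} = \sum_{i=1}^n r^{\mathcal{U}}_{iS_0}$.

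The cascade-centrality formula is then the special case $S_0 = \{j\}$: by Definition~\ref{def:mcc}, $\mathcal{C}_j^{\mathcal{U}} = \sigma^{\mathcal{U}}_{\{j\}} = \sum_{i=1}^n r^{\mathcal{U}}_{i\{j\}}$, and since reachability from the singleton set $\{j\}$ coincides with reachability from $j$, one writes $r^{\mathcal{U}}_{i\{j\}} = r^{\mathcal{U}}_{ij}$, recovering $\mathcal{C}_j^{\mathcal{U}} = \sum_{i=1}^n r^{\mathcal{U}}_{ij}$. There is no real obstacle here — the content is entirely in Theorem~\ref{Pequivalent}, which has already been proved; the corollary is a bookkeeping step. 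The only point requiring a sentence of care is the treatment of the seeds themselves and, under the footnote to Definition~\ref{def:live_edge_tree}, agents that happen to have no out-neighbor in some layer, so that the sum over $i = 1, \dots, n$ is interpreted consistently on both sides.
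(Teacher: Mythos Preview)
Your proposal is correct and follows essentially the same approach as the paper: invoke Definition~\ref{def:mis} to write $\sigma^{\mathcal{U}}_{S_0}$ as a sum of activation probabilities, apply Theorem~\ref{Pequivalent} termwise to identify each with $r^{\mathcal{U}}_{iS_0}$, and then specialize to $S_0=\{j\}$ via Definition~\ref{def:mcc}. The paper's proof is slightly terser (it does not separately discuss the seed case or the footnote edge case), but your added care there is harmless and the logical content is identical.
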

%cascade centrality of agent $j$ computed in Algorithm~\ref{alg} is equivalent to   Definition~\ref{def:mcc}: 
\begin{proof}
    By Definition~\ref{def:reachability},   $r_{iS_0}^{\mathcal{U}}$ is the probability that $i$ is $\mathcal{U}$-reachable from $S_0$ in the multiplex LEM of Definition~\ref{def:multiplexLEM}.  By Theorem~\ref{Pequivalent}, $r_{iS_0}^{\mathcal{U}} = \mathbb{P}^{\mathcal{U}}_{S_0}(\bar x_i=1)$. Thus, by Definition~\ref{def:mis}, $\sum_{i=1}^n r_{iS_0}^{\mathcal{U}} = \sum_{i=1}^n \mathbb{P}^{\mathcal{U}}_{S_0}(\bar x_i=1) = \sigma_{S_0}^{\mathcal{U}}$.  In case $S_0 = \{j\}$, by Definition~\ref{def:mcc}, $\mathcal{C}_j^{\mathcal{U}} = \sigma_j^{\mathcal{U}} = \sum_{i=1}^n r^{\mathcal{U}}_{ij}$.
\end{proof}

% Algorithm~\ref{alg} uses \eqref{corresult} to compute multiplex cascade centrality, easily extended to influence spread for a general $S_0$.
Algorithm~\ref{alg} uses \eqref{corresult} to compute multiplex influence spread and can be specialized to compute cascade centrality.
\begin{algorithm}[Compute multiplex influence spread $\sigma^{\mathcal{G},\mathcal{U}}_{ S_0}$]
    \label{alg} Given multiplex network $\mathcal{G}$ and sequence of  protocols $\mathcal{U}$: 
    \begin{enumerate}
    \item Find the set $L$ of all possible selections of live edges for multiplex network $\mathcal{G}$ and initially active set $S_0$. Calculate the probability $q_l$ of each  $l \in L$.
    \item For each agent $i$ find   $L_{iS_0} ^{\mathcal{U}} \subseteq L$, the set of all $l\in L$ such that %For each selection $l$ and 
   $i$ is $\mathcal{U}$-reachable  from $S_0$ by selection $l$. % and find $L^{\mathcal{U}}_{ij}$.
    \item Calculate %the probability of agent $i$ becomes active, which is 
    $r^{\mathcal{U}}_{iS_0} = \sum_{l \in L^{\mathcal{U}}_{iS_0}} q_l $.
    \item Calculate %multiplex cascade centrality for $j$ as  %$\mathcal{C}_j^{\mathcal{G},\mathcal{U}} = 
    $\sigma_{S_0}^{\mathcal{\mathcal{G},U}} = \sum_{i=1}^n r^{\mathcal{U}}_{iS_0}$.
    \end{enumerate}
\end{algorithm}

%

 %\dz{Given agent $j$ as the seed and the sequence of protocols $\mathcal{U}$, we know from Theorem \ref{Pequivalent} that the probability of $i$ being active in the LTM is the same as computing the probability of $i$ being $\mathcal{U}$-reachable from $j$, which is $\mathbb{P}(x_i=1) = r^{\mathcal{U}}_{ij} = \sum_{l \in L^{\mathcal{U}}_{ij}} q_l$. Then from the definition of cascade centrality, we have $C_j^{\mathcal{U}}=\sum_{i=1}^n \mathbb{P}(x_i=1) = \sum_{i=1}^n r^{\mathcal{U}}_{ij}$ }

Although the algorithm is not efficient, we can use it to accurately calculate multiplex influence spread for multiplex networks with a small number of agents. Next, we propose an efficient approach that sacrifices accuracy to calculate multiplex influence spread for large networks.

\section{A Bayesian Network Approach}
In this section, we map the problem of computing multiplex influence spread into a problem of probabilistic inference in Bayesian networks (BN). This means we can compute multiplex influence spread by using an appropriate algorithm for inference in BNs, such as the loopy belief propagation algorithm. % or the junction tree algorithm.
%Nguyen and Zheng~\cite{nguyen2012influence} mapped the problem of calculating influence spread for the independent cascade model (ICM) in monoplex networks into a probabilistic inference problem. 
%In the ICM with a monoplex network, and using the ``sensing'' convention for graph edges, if agent $i$ becomes active in iteration $t$, it is granted a single chance to activate each currently inactive in-neighbor $j$, with a probability $w_{j,i}$. Whether or not $i$ succeeds in activating its out-neighbors, it cannot activate its out-neighbors in future iterations.  
We first recall the definition of a BN. %, where we can perform probabilistic inference to calculate influence spread. 
% Belief propagation (BP) is an algorithm that performs probabilistic inference on Bayesian networks or Markov random fields. 
\begin{definition}[Bayesian network]
    Let $G=(V,E)$, where $V=1, 2, ..., n$ and $E\subset V \times V$, be a directed acyclic graph (DAG). Each node $i \in V$ is associated with a random variable $x'_i \in \mathcal{X}'_i$.
    %, taking values in some space $\mathcal{X}'_i$. 
    Denote the set of out-neighbors of  $i \in V$ as $N'_i$. Let $\mathbb{P}(x'_i | x'_{N'_i})$ be the probability of $x'_i$ conditioned on the states of nodes in $N'_i$. Then $G$ is a {\em Bayesian network} %is a DAG with 
    if the joint distribution of the random variables is factorized into conditional probabilities:
%    \begin{equation}
      $\mathbb{P}(x'_1,x'_2,...,x'_n)=\prod_{i=1}^n \mathbb{P}(x'_i|x'_{N'_i})$.
 %   \end{equation}
\end{definition}

% A Bayesian network is a directed acyclic graph (DAG) with a conditional probability table (CPT) associated with each node. 
Probabilistic inference in a Bayesian network refers to calculating the marginal probability of the state of each unobserved node, conditioned on the states of the observed nodes. The belief propagation (BP) algorithm was first proposed by Pearl~\cite{pearl1988probabilistic} to solve probabilistic inference in Bayesian networks. Pearl~\cite{pearl1988probabilistic} showed that the algorithm is exact on DAGs without loops, i.e., trees and polytrees~\cite{pearl1988probabilistic}. It is not guaranteed to converge when applied to DAGs with loops. However, Murphy et al.~\cite{murphy1999loopy} showed that loopy belief propagation (LBP) - the application of Pearl's algorithm to DAGs with loops - provides a good approximation when it converges. The junction tree algorithm~\cite{lauritzen1988local} was proposed to perform exact inference on a general graph. The general graph is first modified with additional edges to make it a junction tree, and then belief propagation is performed on the modified network.

In the following algorithm, we show how the joint distribution of the probability of activation of agents in a class of LTM admits the graphical structure of Bayesian network.
% We propose the following algorithm to map the multiplex LTM on a class of multiplex networks to a Bayesian Network.

\begin{algorithm}[Bayesian network from multiplex LTM] 
Given multiplex network $\mathcal{G}$ for which $\proj(\mathcal{G})$ is a DAG and sequence of protocols $\mathcal{U}$:
\label{alg:map}
    \begin{enumerate}
        \item Let $G = \proj(\mathcal{G})$ be the underlying DAG for the Bayesian network. Then $N_i' = N_i=\cup_{k=1}^m N_i^k$.
        \item Let the random variable $x'_i$ of node $i$ in the Bayesian network be  $\bar{x}_i$, the steady-state value of agent $i$ for the multiplex LTM on $\mathcal{G}$. Then $x'_i \in \mathcal{X}'_i=\{0, 1\}$. 
        \item Construct the conditional probabilities for the Bayesian network in terms of the conditional probabilities for the multiplex LTM: $\mathbb{P}(x'_i | x'_{N_i}) = \mathbb{P}^{\mathcal{G}, u_i}(\bar{x}_i| \bar{x}_{N_i})$. %,  calculated for the multiplex LTM. % Protocol (Definition \ref{def:multiplexLTM}), where $N_i=\cup_{k=1}^m N_i^k. $}
    \end{enumerate}
\end{algorithm}
Since all random variables are discrete, the conditional probability $\mathbb{P}(x'_i | x'_{N_i}) = \mathbb{P}^{\mathcal{G}, u_i}(\bar{x}_i| \bar{x}_{N_i})$ can be fully described with a conditional probability table (CPT).
We show how to construct a CPT for $\mathbb{P}^{\mathcal{G}, u_i}(\bar{x}_i| \bar{x}_{N_i})$ for the Fig.~\ref{CPT} example. The CPT of $i$ has $2^{|N_i|}$ rows.
% If agent $i$ has $n_i={\rm nb}(i)$ out-neighbors, the CPT of $i$ has $2^{n_i}$ rows. 
% For each initially active agent $j \in S_0$ (e.g., agent~1 in Fig.~\ref{CPT}), $\mathbb{P}(\bar{x}_j=1)=1$. If an agent $i \notin S_0$ (e.g. agent~2 in Fig.~\ref{CPT}) does not have any parent in $\proj(\mathcal{G})$, $\mathbb{P}(\bar{x}_i=1) = 0$.
For agent 6, $N_6=\{3,4,5\}$, $\bar{x}_{N_6}=\{\bar{x}_3, \bar{x}_4, \bar{x}_5\}$ and its CPT has $2^{|N_6|}=8$ rows. The CPT provides $\mathbb{P}^{\mathcal{G}, u_6}(\bar{x}_6=0 | \bar{x}_3, \bar{x}_4, \bar{x}_5)$ and $\mathbb{P}^{\mathcal{G}, u_6}(\bar{x}_6=1| \bar{x}_3, \bar{x}_4, \bar{x}_5)$.
Table~\ref{cptOR} and Table~\ref{cptAND} are the CPTs for agent 6 when it uses Protocol OR and Protocol AND, respectively. 

\begin{figure}[htbp]
  \centering
  \includegraphics[width=1.0in]{./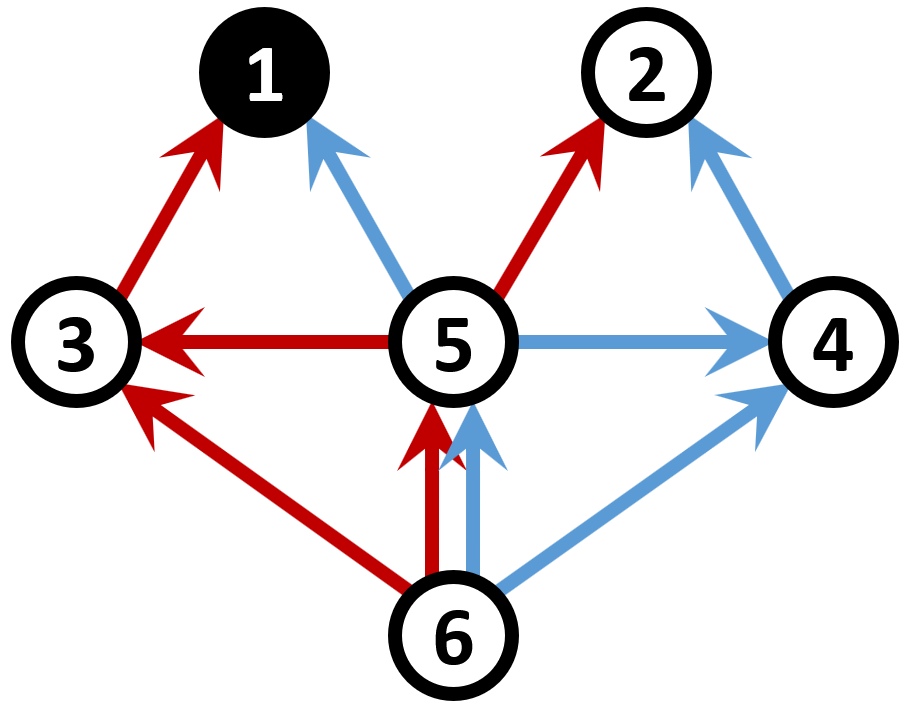}
  \caption{Multiplex network $\mathcal{G}_s$ with two unweighted layers and six agents. Red (blue) arrows represent edges in layer 1 (layer 2). %Blue arrows represent edges in layer 2 2. Both layers are unweighted. 
%   Agent 1 is initially active.
    $S_0=\{1\}$.
  }
  \label{CPT}
\end{figure}

\begin{table}[htbp!]
  \setlength{\abovecaptionskip}{0pt}
  \setlength{\belowcaptionskip}{10pt}
  \caption{CPT of agent 6 with $u_6=\mathrm{OR}$ for $\mathcal{G}_s$ of Fig.~\ref{CPT}}
  \centerline{
    \label{cptOR}
    \begin{tabular}{c c c | c c}
      \toprule[1pt]
      $\bar{x}_3$ & $\bar{x}_4$ & $\bar{x}_5$ & $\mathbb{P}^{ \mathrm{OR}}(\bar{x}_6\!=\!0|\bar{x}_3\!,\! \bar{x}_4\!,\! \bar{x}_5)$ & $\mathbb{P}^{ \mathrm{OR}}(\bar{x}_6\!=\!1|\bar{x}_3\!,\! \bar{x}_4\!,\! \bar{x}_5)$ \\
      \midrule[1.25pt]
      0 & 0 & 0 & 1.00 & 0.00 \\
      0 & 0 & 1 & 0.25 & 0.75 \\
      0 & 1 & 0 & 0.50 & 0.50 \\
      0 & 1 & 1 & 0.00 & 1.00 \\
      1 & 0 & 0 & 0.50 & 0.50 \\
      1 & 0 & 1 & 0.00 & 1.00 \\
      1 & 1 & 0 & 0.25 & 0.75 \\
      1 & 1 & 1 & 0.00 & 1.00 \\
      \bottomrule[1pt]
    \end{tabular}}
\end{table}
\begin{table}[htbp!]
  \setlength{\abovecaptionskip}{0pt}
  \setlength{\belowcaptionskip}{10pt}
  \caption{CPT of agent 6 with $u_6=\mathrm{AND}$  for $\mathcal{G}_s$ of Fig.~\ref{CPT}}
  \centerline{
    \label{cptAND}
    \begin{tabular}{c c c | c c}
      \toprule[1pt]
      $\bar{x}_3$ & $\bar{x}_4$ & $\bar{x}_5$ & $\mathbb{P}^{ \mathrm{AND}}(\bar{x}_6\!=\!0|\bar{x}_3\!,\! \bar{x}_4\!,\! \bar{x}_5)$ & $\mathbb{P}^{ \mathrm{AND}}(\bar{x}_6\!=\!1|\bar{x}_3\!,\! \bar{x}_4\!,\! \bar{x}_5)$\\
      \midrule[1.25pt]
      0 & 0 & 0 & 1.00 & 0.00 \\
      0 & 0 & 1 & 0.75 & 0.25 \\
      0 & 1 & 0 & 1.00 & 0.00 \\
      0 & 1 & 1 & 0.50 & 0.50 \\
      1 & 0 & 0 & 1.00 & 0.00 \\
      1 & 0 & 1 & 0.50 & 0.50 \\
      1 & 1 & 0 & 0.75 & 0.25 \\
      1 & 1 & 1 & 0.00 & 1.00 \\
      \bottomrule[1pt]
    \end{tabular}}
\end{table}

We focus on the case when $\proj(\mathcal{G})$ is a DAG. The case when $\proj(\mathcal{G})$ is not a DAG can be handled by by combining the junction tree algorithm and belief propagation. However, a subsequent marginalization within the appropriate junction node maybe required to obtain the desired probability.

\begin{theorem}
    \label{thm:marginal}
    Given a multiplex network $\mathcal{G}$ for which $\proj(\mathcal{G})$ is a DAG, with seed set $S_0$ and sequence of protocols $\mathcal{U}$, the following two probabilities are the same:
    \begin{enumerate}
        \item $\mathbb{P}^{\mathcal{G},\mathcal{U}}_{S_0}(\bar x_i=1)$, the probability that agent $i$ is active at steady state for the multiplex LTM.
        \item $\mathbb{P}^{\mathcal{G}, \mathcal{U}}(\bar x_i=1 | \bar x_j=1, \bar x_l=0,  j\in S_0,  l \notin S_0, N_l=\varnothing)$, %where $j\in S_0, k \notin S_0, N_k=\varnothing$, 
        the marginal probability of node $i$ in the corresponding Bayesian network of Algorithm \ref{alg:map}, conditioned on observed nodes in the seed set and those not in the seed set that have no out-neighbors. %The Bayesian Network is constructed using Algorithm \ref{alg:map}.
    \end{enumerate}
\end{theorem}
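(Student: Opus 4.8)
The plan is to show that the joint steady-state law of $(\bar x_1,\dots,\bar x_n)$ under the multiplex LTM factorizes \emph{exactly} as the joint distribution of the Bayesian network constructed in Algorithm~\ref{alg:map}, and then to observe that the conditioning events in statement~2) hold almost surely under the LTM, so that conditioning on them is vacuous and the two probabilities in the theorem coincide.

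First I would use the DAG hypothesis: since $\proj(\mathcal{G})=(V,\cup_k E^k)$ is acyclic, so is every layer, and there is a reverse topological ordering $v_1,\dots,v_n$ of $V$ with $N_{v_p}\subseteq\{v_1,\dots,v_{p-1}\}$ for every $p$, so that any agent reachable from $v_p$ in the sensing digraph has index $<p$. Next I would establish the key structural fact: for $i\notin S_0$, the steady-state value $\bar x_i$ is a deterministic function of agent $i$'s own thresholds $(\mu_i^1,\dots,\mu_i^m)$ and of $\bar x_{N_i}=(\bar x_j)_{j\in N_i}$ only. This is because active agents stay active, so the activation condition can be evaluated once, against the final active set $\bar S$; concretely $\bar x_i=1$ iff $\mu_i^k<\sum_{j\in N_i^k}w^k_{i,j}\bar x_j$ for at least one $k$ when $u_i=\mathrm{OR}$ and for all $k$ when $u_i=\mathrm{AND}$, consistently with Lemma~\ref{lemma_0} and Theorem~\ref{Pequivalent}, while $\bar x_j\equiv 1$ for $j\in S_0$ and $\bar x_l\equiv 0$ for $l\notin S_0$ with $N_l=\varnothing$. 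The place where acyclicity is essential is exactly here: no descendant of $i$ influences $\bar x_i$, so there is no feedback and the above really is a function of $\bar x_{N_i}$ and $\mu_i^\bullet$ alone.

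Given this, I would apply the chain rule along the reverse topological order, $\mathbb{P}^{\mathcal{G},\mathcal{U}}_{S_0}(\bar x_{v_1},\dots,\bar x_{v_n})=\prod_{p=1}^n \mathbb{P}^{\mathcal{G},\mathcal{U}}_{S_0}(\bar x_{v_p}\mid \bar x_{v_1},\dots,\bar x_{v_{p-1}})$. Each $\bar x_{v_q}$ with $q<p$ is a function of the thresholds of $v_q$ and of its descendants, none of which is $v_p$ (the ordering forbids a directed path into $v_p$ from any $v_q$ with $q<p$), so $(\mu_{v_p}^1,\dots,\mu_{v_p}^m)$ is independent of $(\bar x_{v_1},\dots,\bar x_{v_{p-1}})$; combining this with the structural fact, the $p$-th factor collapses to $\mathbb{P}^{\mathcal{G},\mathcal{U}}_{S_0}(\bar x_{v_p}\mid \bar x_{N_{v_p}})$, which by the same reasoning depends only on $u_{v_p}$ and on whether $v_p$ is a seed or an out-neighborless non-seed, i.e.\ it equals the CPT entry $\mathbb{P}^{\mathcal{G},u_{v_p}}(\bar x_{v_p}\mid \bar x_{N_{v_p}})$ prescribed by Algorithm~\ref{alg:map}. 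Hence the LTM steady-state joint and the Bayesian-network joint are one and the same distribution on $\proj(\mathcal{G})$.

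Finally, under the LTM every seed is active and every non-seed agent with no out-neighbor is inactive with probability $1$, so the conditioning event $\{\bar x_j=1\ \forall j\in S_0,\ \bar x_l=0\ \forall l\notin S_0 \text{ with } N_l=\varnothing\}$ has probability $1$ under the common joint distribution; conditioning on a probability-one event does not change any marginal, so the Bayesian-network marginal $\mathbb{P}^{\mathcal{G},\mathcal{U}}(\bar x_i=1\mid \text{evidence})$ equals the LTM marginal $\mathbb{P}^{\mathcal{G},\mathcal{U}}_{S_0}(\bar x_i=1)$, which is statement~1). I expect the main obstacle to be making the structural fact of the second paragraph fully rigorous: carefully arguing from acyclicity that $\bar x_i$ has no hidden dependence on downstream randomness and is a bona fide function of $\bar x_{N_i}$ and $\mu_i^\bullet$, and that the induced conditional law matches the CPT of Algorithm~\ref{alg:map} in every case, including the degenerate seed and out-neighborless cases. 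The chain-rule/independence bookkeeping and the vacuous-conditioning step are then routine.
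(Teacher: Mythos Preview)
Your proposal is correct and follows essentially the same route as the paper: both argue that, because activation of $i$ depends only on $\bar x_{N_i}$ (and $i$'s private thresholds) under the DAG assumption, the steady-state joint factorizes according to the Bayesian-network structure of Algorithm~\ref{alg:map}, and the seed/out-neighborless evidence values are forced by the LTM. Your version is considerably more rigorous---you make explicit the reverse topological ordering, the chain-rule factorization, and the independence of $\mu_i^\bullet$ from $(\bar x_{v_1},\dots,\bar x_{v_{p-1}})$---whereas the paper's proof simply asserts the conditional independence and factorization in two sentences; your treatment of the conditioning as vacuous (probability-one event) is a clean way to phrase what the paper handles by directly specifying the observed values.
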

\begin{proof}
Since the event that node $i$ is activated is conditioned on the state of node $i$'s out-neighbors in proj($\mathcal{G}$), it is independent of the state of all other nodes. % that are not out-neighbors. agents that are not neighbors of $i$ in the network, 
Thus, the joint probability of activation of each node $i$ factors into components based on the graphical structure of the Bayesian network constructed in Algorithm \ref{alg:map}. 
So, the probability that agent $i$ is active at steady state for the multiplex LTM (probability in the first statement) can be computed by probabilistic inference on the corresponding Bayesian network (the marginal probability in the second statement). 
% A DAG is a directed graph with no directed cycles, which enables a topological ordering of nodes in $\proj(\mathcal{G})$. This ordering indicates that information can only spread from an earlier agent to a later agent in the ordering regardless of the thresholds of agents. This ordering also indicates that an agent $i$'s probability of becoming active only depends on the state of its out-neighbors $\bar x_{N_i}$, protocol $u_i$ and weights $w_{i,j}^k$ for $j \in N_i^k$. This dependency is captured in the CPTs in the Bayesian Network formed by Algorithm \ref{alg:map}. Since the random variables are steady-state values, the CPTs does not depend on any specific time step, which allows us to calculate the probability of agent $i$ becoming active (probability in 1)) without temporal iterations by probabilistic inference (the marginal probability in 2)). 
The values of the observed nodes in the marginal probability are obtained as follows. Each  $j\in S_0$ is always active, so we observe $\bar x_j=1$.  The state $x_l$ of  $l$ that has no out-neighbors does not change over time. %since it senses no other agents. 
If also $l \notin S_0$, then $l$ is always inactive, and  
we observe $\bar x_l = 0$.
%Then the marginal probability in 2) can be calculated for any unobserved node. 
% This completes the proof. 
%The theorem follows immediately by probabilistic inference.
\end{proof}

The equivalence in Theorem~\ref{thm:marginal} implies that we can compute multiplex influence spread using probabilistic inference in BNs, which can be solved with BP algorithms.
\begin{corollary}
    \label{cor:is}
    Given a multiplex network $\mathcal{G}$ for which $\proj(\mathcal{G})$ is a DAG, with seed set $S_0$ and sequence of protocols $\mathcal{U}$, multiplex influence spread $\sigma^{\mathcal{G},\mathcal{U}}_{ S_0}$ can be computed as
    \begin{equation*}
         \sum_{i=1}^n \mathbb{P}^{\mathcal{G}, \mathcal{U}}(\bar x_i=1 | \bar x_j=1, \bar x_l=0, j\in S_0,  l \notin S_0, N_l=\varnothing).
    \end{equation*}
%    where $j\in S_0, k \notin S_0, N_k=\varnothing$.
\end{corollary}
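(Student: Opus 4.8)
The plan is to obtain the claim by combining Definition~\ref{def:mis} with Theorem~\ref{thm:marginal}, essentially term by term. First I would recall from Definition~\ref{def:mis} that the multiplex influence spread already decomposes as a sum over agents, $\sigma^{\mathcal{G},\mathcal{U}}_{S_0} = \sum_{i=1}^n \mathbb{P}^{\mathcal{G},\mathcal{U}}_{S_0}(\bar x_i = 1)$, of the probabilities that each agent is active at steady state for the multiplex LTM. This reduces the corollary to showing that each summand $\mathbb{P}^{\mathcal{G},\mathcal{U}}_{S_0}(\bar x_i = 1)$ coincides with the corresponding marginal probability $\mathbb{P}^{\mathcal{G}, \mathcal{U}}(\bar x_i=1 \mid \bar x_j=1, \bar x_l=0, j\in S_0, l \notin S_0, N_l=\varnothing)$ of node $i$ in the Bayesian network built in Algorithm~\ref{alg:map}.

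Next I would invoke Theorem~\ref{thm:marginal} directly: under the hypothesis that $\proj(\mathcal{G})$ is a DAG, that theorem establishes precisely the equality of these two quantities for an arbitrary agent $i$. Since the hypotheses of the corollary ($\proj(\mathcal{G})$ a DAG, seed set $S_0$, sequence of protocols $\mathcal{U}$) are exactly those of the theorem, the substitution is immediate and can be carried out under the summation sign, giving $\sigma^{\mathcal{G},\mathcal{U}}_{S_0} = \sum_{i=1}^n \mathbb{P}^{\mathcal{G}, \mathcal{U}}(\bar x_i=1 \mid \bar x_j=1, \bar x_l=0, j\in S_0, l \notin S_0, N_l=\varnothing)$, which is the displayed expression.

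One small point to address for completeness is the treatment of agents $i$ that are themselves observed nodes of the Bayesian network, namely $i \in S_0$ and $i \notin S_0$ with $N_i = \varnothing$. For these, Theorem~\ref{thm:marginal} still applies to arbitrary $i$, and the "marginal" conditional probability reduces to the indicator of the observed value ($1$ for a seed, $0$ for a sink outside $S_0$), which matches $\mathbb{P}^{\mathcal{G},\mathcal{U}}_{S_0}(\bar x_i = 1)$ in those cases as well; hence the sum over all $i = 1, \dots, n$ goes through with no exceptions. It is also worth noting explicitly that the conditioning set on the right-hand side is the same fixed set for every term of the sum, since it is determined only by $S_0$ and the sink structure of $\proj(\mathcal{G})$, so the sum of marginals is well defined.

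I do not expect any real obstacle: the entire substantive content has been front-loaded into Theorem~\ref{thm:marginal} (and, beneath it, Theorem~\ref{Pequivalent} together with the Bayesian-network factorization of Algorithm~\ref{alg:map}), so the proof of the corollary is a one-line consequence of the linearity of expectation in Definition~\ref{def:mis} followed by a termwise application of Theorem~\ref{thm:marginal}.
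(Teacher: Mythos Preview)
Your proposal is correct and follows exactly the paper's approach: the paper's proof is the single sentence ``This follows from Theorem~\ref{thm:marginal} and Definition~\ref{def:mis},'' and you have simply spelled out that one-line argument in more detail, including a harmless sanity check for the observed nodes.
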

\begin{proof}
This follows from Theorem \ref{thm:marginal} and Definition \ref{def:mis}.
\end{proof}

Nguyen and Zheng \cite{nguyen2012influence} showed that computing influence spread in a monoplex DAG with the Independent Cascade Model (ICM) is $\#$P-complete. Here, we prove a similar result for computing influence spread for the multiplex LTM.  The implication is that approximating influence spread for the multiplex LTM is the best we can do for large networks.

\begin{theorem}
    Consider a multiplex network $\mathcal{G}$ for which $\proj(\mathcal{G})$ is a DAG, with seed set $S_0$ and sequence of protocols $\mathcal{U}$. Computing $\sigma^{\mathcal{G},\mathcal{U}}_{ S_0}$ for the multiplex LTM is $\#$P-complete. 
\end{theorem}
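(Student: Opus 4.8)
The plan is to establish $\#$P-completeness by two parts: membership in $\#$P, and $\#$P-hardness via a parsimonious (or at least counting-preserving) reduction from a known $\#$P-complete problem. Membership in $\#$P is the easy direction: by the Corollary to Theorem~\ref{Pequivalent} and Theorem~\ref{thm:marginal}, $\sigma^{\mathcal{G},\mathcal{U}}_{S_0} = \sum_{i} r^{\mathcal{U}}_{iS_0}$, and each $r^{\mathcal{U}}_{iS_0} = \sum_{l \in L^{\mathcal{U}}_{iS_0}} q_l$ is a sum over multiplex selections of live edges whose probabilities $q_l$ are products of rational edge weights. Since $\mathcal{U}$-reachability of $i$ by a given selection $l$ can be checked in polynomial time (construct the live-edge tree, which has polynomial size because each agent appears at most once on any root-to-seed branch, and test the $\mathcal{U}$-feasibility condition of Definition~\ref{def:reachability}), the quantity is a weighted count of polynomially-verifiable witnesses, hence computable in $\#$P after clearing denominators.

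For hardness, I would reduce from the $\#$P-complete problem of counting influence spread in a monoplex DAG under the Independent Cascade Model, as established by Nguyen and Zheng~\cite{nguyen2012influence} — or, more directly, reduce from $s$-$t$ connectedness counting / network reliability on a DAG, which is the combinatorial core of that result. The key observation is that the multiplex LTM strictly generalizes the monoplex LTM: taking $m=1$, every agent's protocol OR and AND coincide, so the multiplex influence spread on a one-layer network with $\mathcal{U} = (\mathrm{OR},\ldots,\mathrm{OR})$ is exactly the monoplex influence spread $\sigma^{G}_{S_0}$. Then I would invoke (or re-derive) the fact that monoplex LTM influence spread on a DAG equals ICM influence spread on the same DAG — this follows from the LEM equivalence, since on a DAG the live-edge model and the independent-cascade coin-flip model induce the same reachability distribution when edge weights are set appropriately. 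Composing these reductions places computing $\sigma^{\mathcal{G},\mathcal{U}}_{S_0}$ at least as hard as the Nguyen–Zheng problem, giving $\#$P-hardness; together with membership, $\#$P-completeness follows.

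The main obstacle I anticipate is the hardness direction, specifically making the reduction from ICM on a DAG to monoplex LTM on a DAG fully rigorous. The subtlety is that the monoplex LEM assigns each unseeded node exactly one live outgoing edge (a selection that sums to a probability distribution over out-neighbors), whereas the ICM flips an independent coin for each edge; these are not literally the same process, and the standard Kempe–Kleinberg–Tardos argument shows equivalence of LTM and LEM but not directly LTM and ICM. I would handle this either by (i) citing the known equivalence between the LTM live-edge process and a suitable edge-weighted cascade on DAGs, or (ii) avoiding ICM entirely and reducing directly from a canonical $\#$P-complete problem such as counting the number of live-edge selections in which a target $t$ is reachable from $s$ — which can be engineered by choosing a DAG where the $q_l$ are uniform, so that $r^{\mathrm{OR}}_{tS_0}$ is (a normalization of) that count. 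A clean way to force uniformity is to use only nodes of out-degree one or two with weight $1/2$, embedding an instance whose reliability count is $\#$P-hard; then $\sigma^{\mathcal{G},\mathrm{OR}}_{S_0}$ determines that count, completing the reduction. The remaining routine work is checking that the reduction is polynomial-time and that the output value recovers the desired count exactly, which I would not belabor.
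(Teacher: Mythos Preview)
Your membership argument is fine in spirit (and arguably cleaner than the paper's appeal to the marginalization problem). The hardness argument, however, has a genuine gap that cannot be patched along the lines you sketch.

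The core problem is your reduction via $m=1$. On a DAG, monoplex LTM influence spread is computable in \emph{polynomial time}: in the monoplex LEM each unseeded node has exactly one live outgoing edge, so the live-edge path from $i$ is a deterministic walk once the selection is fixed, and $r_{iS_0}$ satisfies the linear recurrence $r_{iS_0} = \sum_{j\in N_i} w_{i,j}\, r_{jS_0}$, solvable by a single pass in reverse topological order. Hence there is no polynomial-time reduction from a $\#$P-hard problem to monoplex LTM on a DAG, and your routes (i) and (ii) both collapse. Route (i) fails because LTM and ICM on the same DAG are genuinely different (one live edge per node versus independent edge coins), and route (ii) fails because any instance you embed with out-degree-one-or-two nodes and weight $1/2$ is still a monoplex LTM on a DAG, hence easy.

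The paper's proof exploits precisely the multiplex structure you discard. Given an ICM instance on a DAG $G$, it takes $m$ equal to the maximum out-degree and places each of node $i$'s outgoing edges in its own layer (padding with a dummy sink $i'$ so weights sum to $1$ in each layer). Under Protocol OR, the multiplex LEM then makes the $m$ live-edge choices at $i$ independent across layers, and since each layer carries at most one real out-edge of $i$, the event ``$i$'s live edge in layer $k$ points to a real neighbor $j$'' occurs with probability $w_{j,i}$ independently across $k$---exactly the ICM coin flips. Thus $\sigma^{\mathcal{G}',\mathrm{OR}}_{S_0} = \sigma^{G,\mathrm{ICM}}_{S_0}$, and hardness follows from Nguyen--Zheng. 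The moral is that the multiplicity of layers is what creates the independence needed to encode ICM; a one-layer reduction cannot do this.
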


% \vsnote{Can't we simply argue that the simplex network is a special case of multiplex network, and the computation for this special case is \#P hard. Hence, the multiplex problem is \#P complete.}
% \dznote{ In short, I did it this way because the previous result was on ICM not LTM. Here I did a reduction from monoplex ICM to multiplex LTM, to show that every instance of monoplex ICM can be reduced to an instance of multiplex LTM, so that multiplex LTM is at least as hard as monoplex ICM. Instead, we can also do a reduction from monoplex ICM to monoplex LTM and then to multiplex LTM. It's true that monoplex network is a special case of multiplex network, so the second reduction is trivial. However, the first reduction is not trivial to me.}

% \vsnote{Define ICM and its connection with LTM.}
% \dznote{ICM defined in its first appearance (first paragraph of Section IV-B).}

\begin{proof}
The multiplex LTM problem, i.e., computing $\sigma^{\mathcal{G},\mathcal{U}}_{ S_0}$ for the multiplex LTM, is $\#$P-complete if (i) it is $\#$P-hard and (ii) it is in $\#$P.  We first prove (ii). By Corollary~\ref{cor:is}, every instance of the  multiplex LTM problem can be reduced to a marginalization problem.  Since the marginalization problem is  $\#$P-complete, the multiplex LTM problem is in $\#$P.

We prove (i) by showing that the multiplex LTM problem is a reduction from the ICM problem, which has been shown to be $\#$P-complete (Theorem 1 of \cite{nguyen2012influence}).  The ICM problem refers to computing the influence spread $\sigma_{S_0}^{G,\mathrm{ICM}}$ for the ICM on a monoplex DAG $G=(V,E)$ with seed set $S_0$ and  probability $w_{j, i}$ assigned to each edge $e_{j, i}\in E$.  
   
   %To show the reduction, 
    Let $m$ be the largest number of out-neighbors over all nodes in $V$. Consider a multiplex network with $m$ layers $G_1,...,G_m$ where $G_k=(V,E^k)$. %The node set is the same as the node set in the DAG $G$. 
    To define edge sets $E^k$, assign all edges in $G$ and to the multiplex network such that for each node there is at most one outgoing edge in each layer $k$. Let $w_{j, i}$ be the weight of edge from $i$ to $j$ in the multiplex network. %We create another multiplex network $\mathcal{G}'$ in which  the weights of all out-neighbors in a layer of an agent sum up to 1. 
    Define a set $V'$ with a node $i'\in V'$ for each node $i\in V$. For each $i$ and $k$, compute the sum of weights of $i$'s outgoing edges in layer $k$. If the sum is not 1, create an edge $e_{i,i'}^k\in E'^k$ from $i$ to $i'$ and assign to the edge  a weight that makes the sum equal 1. Let multiplex network $\mathcal{G}'$ have $m$ layers  $G_1',...,G_m'$ where $G_k'=(V \cup V',E^k \cup E'^k)$. Then $\proj(\mathcal{G}')$ is a DAG. Further, every node $i' \in V'$ has no out-neighbors and  $i' \notin S_0$. So in the multiplex LTM $i' \in V'$ remains inactive.
    Let $\mathcal{U}=\{\mathrm{OR}, ...\mathrm{OR} \}$.
    %We can get the influence spread $\sigma^{\mathcal{G}',\mathcal{U}}_{ S_0}$ with LTM. 
    By construction,  $\sigma^{\mathcal{G}',\mathcal{U}}_{ S_0}=\sigma_{S_0}^{G,\mathrm{ICM}}$. %, which completes the reduction.
% Since we reduce our problem from a $\#$P-complete problem, our problem is $\#$P-hard. Moreover, our problem can be reduced to a marginalization problem on a DAG (Corollary ~\ref{cor:is}), which is known to be $\#$P-complete. Thus, our problem is $\#$P-complete.
\end{proof}
%As computing influence spread of multiplex LTM is a \#P-complete problem, for large networks, the best we can do is to efficiently approximate the influence spread. 

% \dznote{better to put in a remark.} In the multiplex LTM, for a multiplex network whose projection network is not a DAG, different sets of thresholds of agents would result in different directions of cascade, because there might be bi-directional edges in the projection network. A DAG, on the other hand, is a directed graph with no directed cycles, which enables a topological ordering of nodes in the graph. This ordering indicates that information can only spread from an earlier node to an later node in the sequence regardless of the thresholds of agents.

\section{Analytical Expressions of Influence Spread} %Role of Distinguishing Sensing Modalities  %The Homogeneous Multiplex LTM  %Cascades in the Multiplex LTM
We derive analytical expressions for multiplex cascade centrality  for two illustrative classes of the LTM with a two-layer (duplex) network and $N$ agents.  In Section~\ref{sec:duplexrepeat}, each of the layers is the same path network.  This means that each agent has the same neighbors for each sensing modality;  for example,  each agent can see and hear its neighbors. Our results reveal how the cascade is affected when agents distinguish between signals rather than project them.  %In Section~\ref{sec:duplexpermutation}, there is a cyclic permutation of agents in one layer relative to the other. 
If we view the path network as a cycle network with one link missing, then the duplex permutation network we study in Section~\ref{sec:duplexpermutation} has one layer missing the link between agents 1 and $N$  and the other layer missing the link between agents $N-1$ and $N$. 
\subsection{Duplex Repeated Path Network}
\label{sec:duplexrepeat}
%In this section, we study 
%Denote $G_{Pa}$ the monoplex path network, shown as one of the layers in  (Fig.~\ref{fig:mono}) and 
Let $\mathcal{G}_R$ be the duplex repeated path network  of Fig.~\ref{fig:reduplex}, which has the monoplex path network $G_{Pa} = \textrm{proj}(\mathcal{G}_R)$ on each of its two layers. 
For any $N$ and any agent $j$, monoplex cascade centrality  $\mathcal{C}_j^{G_{Pa}}$ and multiplex cascade centralities $\mathcal{C}_j^{\mathcal{G}_R, \mathrm{OR}}$ and $\mathcal{C}_j^{\mathcal{G}_R, \mathrm{AND}}$ can be expressed analytically as follows. 

\begin{figure}[htbp]
    \centering
    \includegraphics[width=1.7in]{./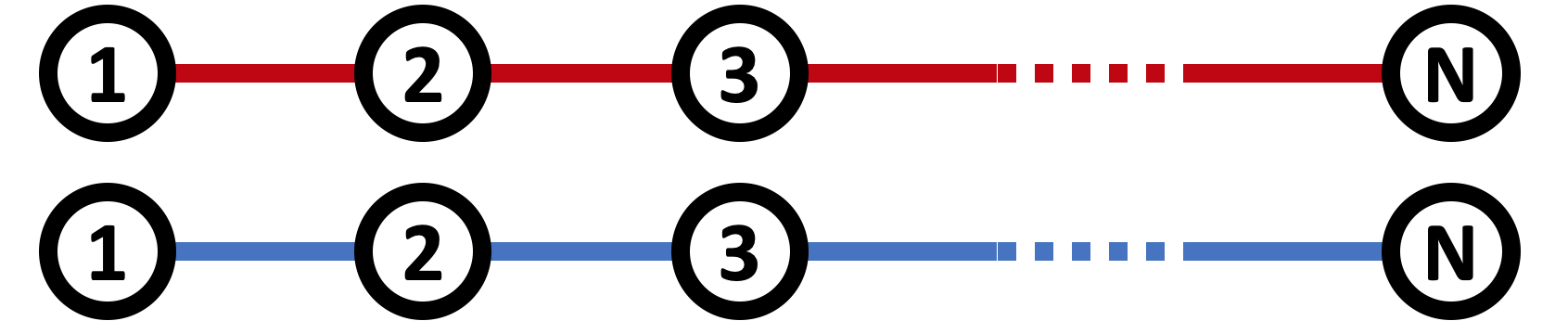}
    \caption{Duplex repeated path network $\mathcal{G}_R$ has path graph $G_{Pa}$ as each layer.}
    \label{fig:reduplex}
\end{figure}

\begin{proposition}[Multiplex cascade centrality for  $\mathcal{G}_R$]
\label{prop:repeated}
    Consider the monoplex path network $G_{Pa}$ and duplex repeated path network $\mathcal{G}_R$ for $N$ agents with $u_i = u \in \{\textrm{OR},\text{AND}\}$. 
    Then 
    \[
    \mathcal{C}_j^{G_{Pa}} = h_j(.5), \;\; \mathcal{C}_j^{\mathcal{G}_R, \mathrm{OR}} =h_j(.75), \;\; \mathcal{C}_j^{\mathcal{G}_R, \mathrm{AND}}=h_j(.25), 
    \]
   % Then $\mathcal{C}_j^{G_{Pa}} = C_j(0.5)$, $\mathcal{C}_j^{\mathcal{G}_R, \mathrm{OR}} =C_j(0.75)$ and $\mathcal{C}_j^{\mathcal{G}_R, \mathrm{AND}}=C_j(0.25)$, where
    \begin{equation*}
        h_j(p_0) \!=\! 
        \begin{cases}
            \sum_{l=0}^{N-2}p_0^l + p_0^{N-2},  \;\;\;j \in \{1,N\} \\
            1 + \sum_{l=0}^{N-3}p_0^l + p_0^{N-3}, \;\;\;j \in \{2,N-1\} \\
            \sum_{l=0}^{j-1}p_0^l \!+\! p_0^{j-1} \!+\! \sum_{l=1}^{N \!-\! j \!-\! 1}p_0^l \!+\! p_0^{N \!-\! j \!-\! 1}, \;\;\; \mathrm{o.w.}
        \end{cases}
    \end{equation*}
Moreover,
    \[
\mathcal{C}_j^{\mathcal{G}_R, \mathrm{OR}} > \mathcal{C}_j^{G_{Pa}} > \mathcal{C}_j^{\mathcal{G}_R, \mathrm{AND}}.
\]
\end{proposition}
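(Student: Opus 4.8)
The plan is to reduce the entire claim to monotonicity of the one polynomial $h_j$ already produced by the first part of the proposition. That part gives $\mathcal{C}_j^{G_{Pa}}=h_j(1/2)$, $\mathcal{C}_j^{\mathcal{G}_R,\mathrm{OR}}=h_j(3/4)$ and $\mathcal{C}_j^{\mathcal{G}_R,\mathrm{AND}}=h_j(1/4)$, so the desired chain $\mathcal{C}_j^{\mathcal{G}_R,\mathrm{OR}}>\mathcal{C}_j^{G_{Pa}}>\mathcal{C}_j^{\mathcal{G}_R,\mathrm{AND}}$ is exactly $h_j(3/4)>h_j(1/2)>h_j(1/4)$; it therefore suffices to show $p_0\mapsto h_j(p_0)$ is strictly increasing on $(0,1)$. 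The three evaluation points are not arbitrary: they are the probabilities, along the path, that a single propagation step succeeds — one $\mathrm{Bernoulli}(1/2)$ live-edge choice for the monoplex, versus the OR ($1-(1/2)^2=3/4$) or the AND ($(1/2)^2=1/4$) of two independent such choices for the two identical layers — so the monotonicity of $h_j$ is precisely the statement that pairing a layer disjunctively helps spreading while pairing it conjunctively hurts it.

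To obtain the monotonicity I would simply inspect the three branches of the definition of $h_j$. In each branch $h_j(p_0)$ is a sum of nonnegative integer multiples of powers $p_0^{\,l}$, with a strictly positive constant term and, once the path is long enough relative to $j$, a strictly positive coefficient on some $p_0^{\,l}$ with $l\ge 1$: in the ``o.w.'' branch the summand $\sum_{l=1}^{N-j-1}p_0^{\,l}$ already supplies a positive linear term since there $j\le N-2$; in the $j\in\{1,N\}$ branch a linear term appears once $N-2\ge 1$, and in the $j\in\{2,N-1\}$ branch once $N-3\ge 1$. Hence $h_j'(p_0)=\sum_l l\,c_l\,p_0^{\,l-1}>0$ for every $p_0>0$, so $h_j$ is strictly increasing on $(0,\infty)$, and in particular $h_j(1/4)<h_j(1/2)<h_j(3/4)$, which is the claim after substituting the three identities above. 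I do not expect a real obstacle on this route — the only care needed is the short list of degenerate cases in which $h_j$ collapses to a constant, namely $N=2$ and $N=3$ with the central seed $j=2$, where all three centralities equal $N$ and the inequalities hold only with ``$\le$''; the strict statement is correct exactly when the seed has an interior neighbor, in particular for all $j$ when $N\ge 4$, and I would record that hypothesis.

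If one wanted a proof that does not quote the closed forms, I would argue by coupling the monoplex LEM on $G_{Pa}$ with the duplex LEM on $\mathcal{G}_R$ on a common probability space, taking the monoplex selection to be the layer-$1$ selection of the duplex. Then every monoplex live-edge path from $i$ to $j$ is a branch of $T^l_i$ that ends in the seed and uses only layer-$1$ edges, hence $\mathcal{U}$-feasible for $i$ when every agent uses OR (no AND agent constrains it), so sample-by-sample monoplex reachability implies $\mathrm{OR}$-reachability and $r_{ij}\le r^{\mathrm{OR}}_{ij}$. Dually, in the path a live edge of $i$ pointing away from the seed leads only to vertices from which $j$ cannot be reached without revisiting $i$, so no branch of $T^l_i$ ending in the seed uses such an edge; thus if $i$ is $\mathrm{AND}$-reachable the feasibility condition forces every unseeded node it meets to send all its live edges — in particular its layer-$1$ edge — toward the seed, which spells out a layer-$1$ path from $i$ to $j$ and gives $r^{\mathrm{AND}}_{ij}\le r_{ij}$. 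Summing over $i$ yields $\mathcal{C}_j^{G_{Pa}}\le\mathcal{C}_j^{\mathcal{G}_R,\mathrm{OR}}$ and $\mathcal{C}_j^{\mathcal{G}_R,\mathrm{AND}}\le\mathcal{C}_j^{G_{Pa}}$, and strictness follows from any interior neighbor $i$ of the seed, for which $r_{ij}=1/2$ but $r^{\mathrm{OR}}_{ij}=3/4$ and $r^{\mathrm{AND}}_{ij}=1/4$. On this alternative route the one genuine obstacle is the AND direction — pinning down which live edges can lie on a branch ending in the seed — which the closed-form route sidesteps, and is why I would present the closed-form proof.
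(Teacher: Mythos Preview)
Your primary argument is essentially the paper's: both identify the one-step activation probability $p_0\in\{1/4,1/2,3/4\}$ (the paper via the CPT, you via the OR/AND of two independent $\mathrm{Bernoulli}(1/2)$ choices), factor $\mathbb{P}_j(\bar x_i=1)$ as $p_0^{|i-j|}$ with boundary corrections, and then get the inequality from $1/4<1/2<3/4$ applied term by term, which is exactly your monotonicity of $h_j$. Your explicit handling of the degenerate cases $N\le 3$ (where strictness fails) and the alternative LEM coupling are worthwhile additions that the paper does not spell out.
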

\begin{proof}
The result can be derived from Algorithm \ref{alg}. Here, we provide a perspective from probabilistic inference in BNs. Table \ref{tab:cpt_ex} shows the CPT of agent $i \in \{2, \ldots, N-1\}$ for $G_{Pa}$ and for $\mathcal{G}_R$ with $u=\textrm{OR}$ and $u=\textrm{AND}$.
\begin{table}[htbp!]
    \setlength{\abovecaptionskip}{0pt}
    \setlength{\belowcaptionskip}{10pt}
    \caption{CPT of agent $i\in \{2,3,...,N-1\}$. For $G_{Pa}$, $p_0=.5$; for $\mathcal{G}_R$ with $u = \mathrm{OR}$, $p_0=.75$; for $\mathcal{G}_R$ and $u = \mathrm{AND}$, $p_0=.25$.}
    \centerline{
    \begin{tabular}{c c | c c}
        \toprule[1pt]
        $x_{i-1}$ & $x_{i+1}$ & $\mathbb{P}^{\mathcal{G}, u} (x_i\!=\!0| x_{i\!-\!1}, x_{i\!+\!1})$ & $\mathbb{P}^{\mathcal{G}, u} (x_i\!=\!1| x_{i\!-\!1}, x_{i\!+\!1})$ \\
        \midrule[1.25pt]
        0 & 0 & 1 & 0 \\
        0 & 1 & $1-p_0$ & $p_0$ \\
        1 & 0 & $1-p_0$ & $p_0$ \\
        1 & 1 & 0 & 1 \\
        \bottomrule[1pt]
    \end{tabular}}
    \label{tab:cpt_ex}
\end{table}
With one initially active agent $j$, the activity can only spread from an agent closer to $j$ to an agent farther from $j$. Assume $1 < j < i < N$, for all three networks, the probability that agent $i$ is active at steady state for the LTM can be factorized as follows:
\begin{align}
    \label{eqn:ex_fac}
    \mathbb{P}^{\mathcal{G}, u}_j (\bar x_i = 1) = & \; \mathbb{P}^{\mathcal{G}, u} (\bar x_i \!=\! 1 | \bar x_{i-1}\!=\!1, x_{i+1} \!=\! 0) \\ &\times \mathbb{P}^{\mathcal{G}, u} (\bar x_{i-1} \!=\! 1 | \bar x_{i-2}\!=\!1, x_{i} \!=\! 0)\times \cdots \nonumber\\ &\times \mathbb{P}^{\mathcal{G}, u} (\bar x_{j+1} \!=\! 1 | \bar x_{j}\!=\!1, x_{j+2} \!=\! 0) = p_0^{i-j}.\nonumber
\end{align}
The last equality holds since each agent uses the same protocol and so each conditional probability is $p_0$.
%, given that all agents use the same protocol. 
The cascade centralities follow by Definitions \ref{def:mis} and \ref{def:mcc}. 
Cases $i,j \in\{ 1, N\}$ %and $j=1,N$ 
are calculated similarly.  The inequality follows since $\mathbb{P}_j^{\mathcal{G}_R, \mathrm{OR}}(\bar x_i \!=\! 1) > \mathbb{P}_j^{G_{Pa}}(\bar x_i \!=\! 1) > \mathbb{P}_j^{\mathcal{G}_R, \mathrm{AND}}(\bar x_i \!=\! 1)$.
\end{proof}

Proposition~\ref{prop:repeated} provides a systematic way to evaluate spread for any number of agents $N$ in the multiplex LTM on $\mathcal{G}_R$. The inequality is consistent with the intuition in 
Remark~\ref{rem:spreading}: when agents can distinguish  signals from different sensing modalities and use Protocol OR (AND), they are more (less) easily activated, and the cascade is enhanced (diminished) relative to when agents cannot distinguish signals.

\subsection{Duplex Permutation Networks}
\label{sec:duplexpermutation}
Let $\mathcal{G}_P$ be the duplex permutation network of Fig. \ref{permu}. 
% The projection network of the repeated duplex path networks are paths $G_M$, but 
Then $\textrm{proj}(\mathcal{G}_P) = G_C$, the cycle network.
%As the analytical forms of $\mathcal{C}_i^{\mathcal{G}_P, \mathrm{OR}}$ and $\mathcal{C}_i^{\mathcal{G}_P, \mathrm{AND}}$ are complicated, 
We derive analytical expressions for $\mathbb{P}_j^{\mathcal{G}_P, u} (\bar x_i = 1)$, $u \in \{\textrm{OR},\textrm{AND}\}$, as follows.

\begin{figure}[htbp]
    \centering
    \includegraphics[width=1.5in]{./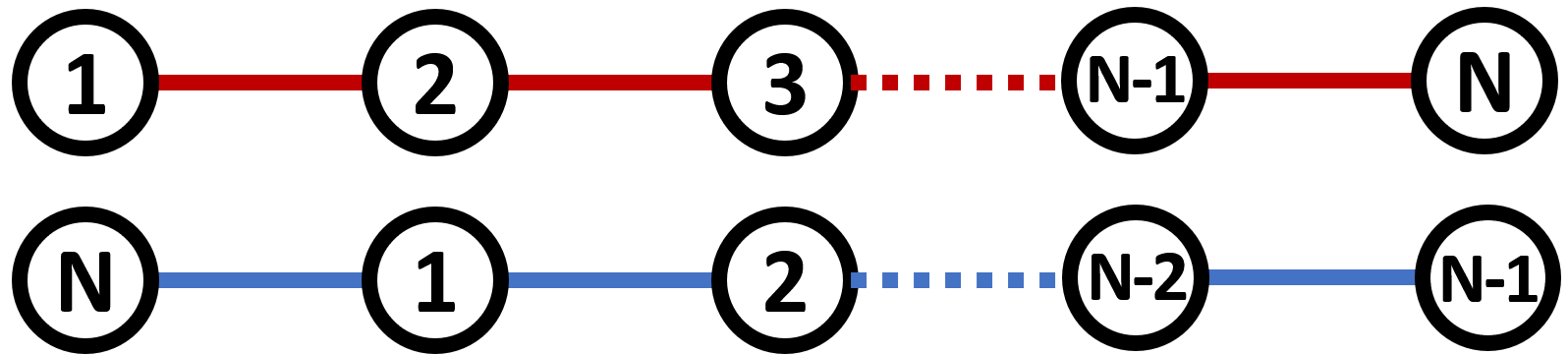}
    \caption{Duplex permutation network $\mathcal{G}_P$.}
    \label{permu}
\end{figure}

\begin{proposition}[Probabilities for multiplex cascade centrality for $\mathcal{G}_P$]
\label{prop:permuation}
    Consider the duplex permutation network $\mathcal{G}_P$ for $N$ agents with $u_i = u \in \{\textrm{OR},\textrm{AND}\}$ and the cyclic network $G_C = \textrm{proj}(\mathcal{G}_P)$. 
    Then
    \begin{align}
        \label{eqn:g_p}
        \mathbb{P}_j^{\mathcal{G}_P, \mathrm{OR}} (\bar x_i \!=\! 1) &\!=\! (.75)^{|i\!-\!j|} \!+\! .5(.75)^{N\!-\!|i\!-\!j|\!-\!3} \!-\! .5(.75)^{N\!-\!5} \nonumber\\ \mathbb{P}_j^{\mathcal{G}_P, \mathrm{AND}}(\bar x_i \!=\! 1) &\!=\! (.25)^{|i-j|} \\
        \mathbb{P}_j^{G_C}(\bar x_i \!=\! 1) &\!=\! (.5)^{|i-j|} + (.5)^{N-|i-j|} \nonumber
    \end{align}
    where $3\leq i \leq N-3$ and $j = 2, ..., i-2, i+2, ..., N-2$.
Moreover,
    \[
\mathcal{C}_j^{\mathcal{G}_P, \mathrm{OR}} > \mathcal{C}_j^{G_{C}} > \mathcal{C}_j^{\mathcal{G}_P, \mathrm{AND}}
\]
  \[
  \mathbb{P}_j^{\mathcal{G}_P, \mathrm{AND}}(\bar x_i \!=\! 1) \!=\! \mathbb{P}_j^{\mathcal{G}_R, \mathrm{AND}}(\bar x_i \!=\! 1) \!=\! (0.25)^{|i-j|} 
  \]
  \[
  \mathbb{P}_j^{\mathcal{G}_P, \mathrm{OR}}(\bar x_i \!=\! 1) \! > \! \mathbb{P}_j^{\mathcal{G}_R, \mathrm{OR}}(\bar x_i \!=\! 1).
  \]
\end{proposition}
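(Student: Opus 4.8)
The plan is to pass from the LTM to the LEM via Theorem~\ref{Pequivalent}, replacing each $\mathbb{P}^{\mathcal{G}_P,u}_j(\bar x_i=1)$ by the $u$-reachability probability $r^{u}_{ij}$ and $\mathbb{P}^{G_C}_j(\bar x_i=1)$ by the ordinary live-edge reachability probability on the cycle $G_C$. I would fix $i,j$ in the stated ranges and, using the reflection symmetry of the network, assume $j<i$. These ranges guarantee that every node on the short arc $i,i-1,\dots,j$ is \emph{generic}, i.e.\ has neighbour set $\{v-1,v+1\}$ in both layers; the only special nodes are $N$ (unique layer-1 live edge $N\to N-1$, unique layer-2 live edge $N\to 1$), $1$ (layer-1 live edge always $1\to 2$), and $N-1$ (layer-2 live edge always $N-1\to N-2$). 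For a generic node $v$ the event that $v$ has a live edge to $v-1$ (resp.\ to $v+1$) has probability $3/4$, the event that it has live edges to \emph{both} neighbours has probability $1/2$, and all such events are independent over $v$. For $G_C$ I would use the classical fact that, since each node picks exactly one of its two cycle edges, the live-edge digraph is functional and $i$ reaches $j$ iff its unique live walk runs monotonically to $j$ along one of the two arcs, two disjoint events of probabilities $(.5)^{|i-j|}$ and $(.5)^{N-|i-j|}$.

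For Protocol AND I would invoke Lemma~\ref{lemma_0}(2): $i$ is AND-reachable iff both of its live children are, recursively, and no node on a directed live-edge cycle is ever AND-reachable (unrolling its live-edge tree gives an infinite branch). The key claim is that this forces a ``straight'' cascade: if a reachable node had a live child $v\to v+1$, then to avoid the cycle $v\leftrightarrow v+1$ node $v+1$ must send both its live edges to $v+2$, and iterating, both live edges of $N-2$ point to $N-1$; but node $N-1$'s forced layer-2 edge $N-1\to N-2$ then closes the cycle $N-2\leftrightarrow N-1$ (its only alternative, the layer-1 edge $N-1\to N$, closes $N-1\leftrightarrow N$), so $N-2$ is not AND-reachable, a contradiction. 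Hence each node of the short arc $i,\dots,j+1$ must send both live edges toward $j$ --- an independent event of probability $1/4$ --- giving $r^{\mathrm{AND}}_{ij}=(.25)^{|i-j|}$. Since $\mathcal{G}_R$ is a path with no wrap neighbour at all, the same recursion gives the same value, which also equals Proposition~\ref{prop:repeated} with $p_0=.25$; this yields the claimed AND identities.

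For Protocol OR, Lemma~\ref{lemma_0}(1) reduces $u$-reachability to ordinary reachability, so $i$ is OR-reachable iff there is a live-edge path from $i$ to $j$; since the underlying graph is a cycle such a simple path is one of the two arcs, and I would use inclusion--exclusion: $r^{\mathrm{OR}}_{ij}=\mathbb{P}(\text{short live})+\mathbb{P}(\text{long live})-\mathbb{P}(\text{both live})$. Counting the generic ``toward-$j$'' edges on the short arc gives $(.75)^{|i-j|}$. The long arc $i\to\cdots\to N-2\to N-1\to N\to 1\to 2\to\cdots\to j$ uses $N-|i-j|-3$ generic ``$+1$'' edges, the forced edges $N\to 1$ and $1\to 2$, and the single nondeterministic boundary edge $N-1\to N$ of probability $1/2$, giving $.5\,(.75)^{N-|i-j|-3}$. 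For the overlap term, the two arcs constrain disjoint node sets except for node $i$ itself (which must then carry live edges to both neighbours, probability $1/2$); combining this with the boundary factor and the remaining $N-5$ generic nodes gives the third summand of \eqref{eqn:g_p}.

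For the inequalities I would argue as follows. Coupling each node's $G_C$ choice with its layer-2 choice at node $1$ and with its layer-1 choice at every other node, the monoplex live-edge set of each node is contained in its multiplex live-edge set, so reachability on $G_C$ implies OR-reachability on $\mathcal{G}_P$; thus $\mathbb{P}^{G_C}_j(\bar x_i=1)\le\mathbb{P}^{\mathcal{G}_P,\mathrm{OR}}_j(\bar x_i=1)$ for every $i$, strictly for some $i$, and summation gives $\mathcal{C}^{G_C}_j<\mathcal{C}^{\mathcal{G}_P,\mathrm{OR}}_j$. The other inequality follows termwise since $\mathbb{P}^{\mathcal{G}_P,\mathrm{AND}}_j(\bar x_i=1)<(.5)^{|i-j|}\le\mathbb{P}^{G_C}_j(\bar x_i=1)$ (with the boundary cases handled as in Proposition~\ref{prop:repeated}). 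Finally $\mathbb{P}^{\mathcal{G}_P,\mathrm{OR}}_j(\bar x_i=1)\ge\mathbb{P}^{\mathcal{G}_R,\mathrm{OR}}_j(\bar x_i=1)=(.75)^{|i-j|}$ is immediate from \eqref{eqn:g_p}, whose extra wrap term $.5\,(.75)^{N-|i-j|-3}-.5\,(.75)^{N-5}=.5\,(.75)^{N-|i-j|-3}\bigl(1-(.75)^{|i-j|-2}\bigr)$ is nonnegative, strictly positive once $|i-j|>2$. I expect the AND no-wrap argument to be the main obstacle: it is where the precise wiring of $1,N-1,N$ is decisive and where one must use the live-edge-cycle obstruction rather than the acyclic-CPT factorization that worked for $\mathcal{G}_R$; the OR inclusion--exclusion, by contrast, is routine once the boundary transition probabilities and the single shared node are identified.
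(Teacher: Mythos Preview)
Your approach is considerably more explicit than the paper's. The paper's proof is essentially two sentences: the three probability formulas ``derive from Algorithm~\ref{alg}'', and the remaining inequalities follow by direct comparison of the closed forms in \eqref{eqn:ex_fac} and \eqref{eqn:g_p}. You instead work directly in the LEM: a structural dead-end argument for AND (the forced layer-$2$ edge $N{-}1\to N{-}2$ kills any upward chain), an arc-by-arc inclusion--exclusion for OR, the classical functional-graph argument for $G_C$, and a coupling argument for $\mathcal{C}^{G_C}_j<\mathcal{C}^{\mathcal{G}_P,\mathrm{OR}}_j$ that the paper does not give at all (the paper simply asserts the pointwise probability inequality). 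Your AND argument is correct and is exactly the mechanism behind the paper's informal remark after the proposition that ``activity can only spread \ldots along the path \ldots that does not contain $N$''; your route makes this rigorous rather than leaving it to Algorithm~\ref{alg}.

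There is one place where your bookkeeping does not close. In the OR overlap term you correctly identify that the only node shared by the two arcs is $i$, contributing a factor $1/2$, and that $N-5$ generic nodes each contribute $3/4$. But the ``boundary factor'' you allude to is not $1$: the long arc still requires the edge $N{-}1\to N$, which (since layer~$2$ at $N{-}1$ is forced to $N{-}2$) has probability $1/2$, while the steps through $N$ and $1$ are free. Carrying this through gives
\[
\mathbb{P}(\text{both arcs live})=\tfrac12\cdot\tfrac12\cdot(0.75)^{N-5}=0.25\,(0.75)^{N-5},
\]
which does \emph{not} match the coefficient $0.5$ in the third summand of \eqref{eqn:g_p}. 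So either your inclusion--exclusion is miscounting a boundary node, or the stated formula disagrees with what your method actually produces; in either case you cannot simply assert that ``combining this \ldots gives the third summand''. You should redo this count carefully (e.g.\ check it numerically on $N=7$, $i=4$, $j=2$) before claiming the formula. This is the only substantive gap; the rest of your plan is sound and, where it overlaps with the paper, strictly more informative.
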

\begin{proof}
    The probabilities derive from Algorithm \ref{alg}. The first inequality follows since $\mathbb{P}_j^{\mathcal{G}_P, \mathrm{OR}}(\bar x_i \!=\! 1) > \mathbb{P}_j^{G_C}(\bar x_i \!=\! 1) > \mathbb{P}_j^{\mathcal{G}_P, \mathrm{AND}}(\bar x_i \!=\! 1)$. 
    The rest follows from (\ref{eqn:ex_fac}) and  (\ref{eqn:g_p}).
\end{proof}
When $u_N = \textrm{AND}$, the activity can only spread in $\mathcal{G}_P$ from  $j$ to  $i$ along the path between $j$ and $i$ on $G_C$ that does not contain $N$.  This explains the equality of probabilities for $\mathcal{G}_P$ and $\mathcal{G}_R$. 
When $u_i = u = \textrm{OR}$, the activity can spread in $\mathcal{G}_P$ from $j$ to  $i$ along either path between $j$ and $i$ on $G_C$. This explains the last inequality, i.e., that the cascade is greater in $\mathcal{G}_P$ than in $\mathcal{G}_R$.
%Proposition~\ref{prop:permuation} provides a systematic way to evaluate spread in the multiplex  LTM on $\mathcal{G}_P$. 
As in Proposition~\ref{prop:repeated}, the first inequality in Proposition~\ref{prop:permuation} is consistent with the intuition in Remark~\ref{rem:spreading}.
%When agents can distinguish signals from different sensing modalities, the cascade is enhanced (diminished) with Protocol OR (AND) relative to when agents cannot distinguish signals. 
% Further, when agent $N$ uses Protocol AND, the cascade is the same in $\mathcal{G}_P$ as in $\mathcal{G}_R$. When all agents use Protocol OR, 

\section{Heterogeneity in Protocol} %Role of Heterogeneous Protocols
\subsection{Small Heterogeneous Multiplex Networks}
We compute multiplex cascade centrality to evaluate for the LTM the role of heterogeneity in the tradeoff between sensitivity of the cascade to a real input and robustness of the cascade to a spurious signal.  Knowing that agents that use Protocol OR enhance the cascade and agents that use  AND diminish the cascade, we examine how to leverage heterogeneity in protocol to advantage. Parametrizing the tradeoff by $c$,
we solve as a function of $c$ for the optimal heterogeneous distribution of agents using OR and agents using AND. % over the multiplex network, .

We investigate with the duplex network of Fig.~\ref{fig:fish_network}, which is small enough that we can compute cascade centrality with Algorithm~\ref{alg}.  There are six agents (nodes 1 to 6) and a seventh node that represents an external signal. When node 7 appears in both layers, as in Fig.~\ref{fig:fish_network}, we interpret it as real.  When node 7 appears in only one layer, we interpret it as spurious. We assume that only one agent (e.g., agent 1 in Fig.~\ref{fig:fish_network}) senses node 7, whether or not it is real or spurious, and that it is equally likely to be any of the six agents that sense it. We assume the edge pointing to the signal has a weight of 1.

The graph in layer 1 (red edges) in Fig.~\ref{fig:fish_network} represents a {\em directed sensing} modality, e.g., a team of robots with front and side facing cameras or a school of fish that see poorly to their rear.  The graph in layer 2 (blue edges) represents a {\em proximity sensing} modality, e.g.,  robots that receive local broadcasts %from others within a fixed distance 
or fish that detect local movement with their lateral line. %of others in close vicinity.  
% ``hear'' locally

\begin{figure}[htbp]
  \centering
  \includegraphics[width=1.7in]{./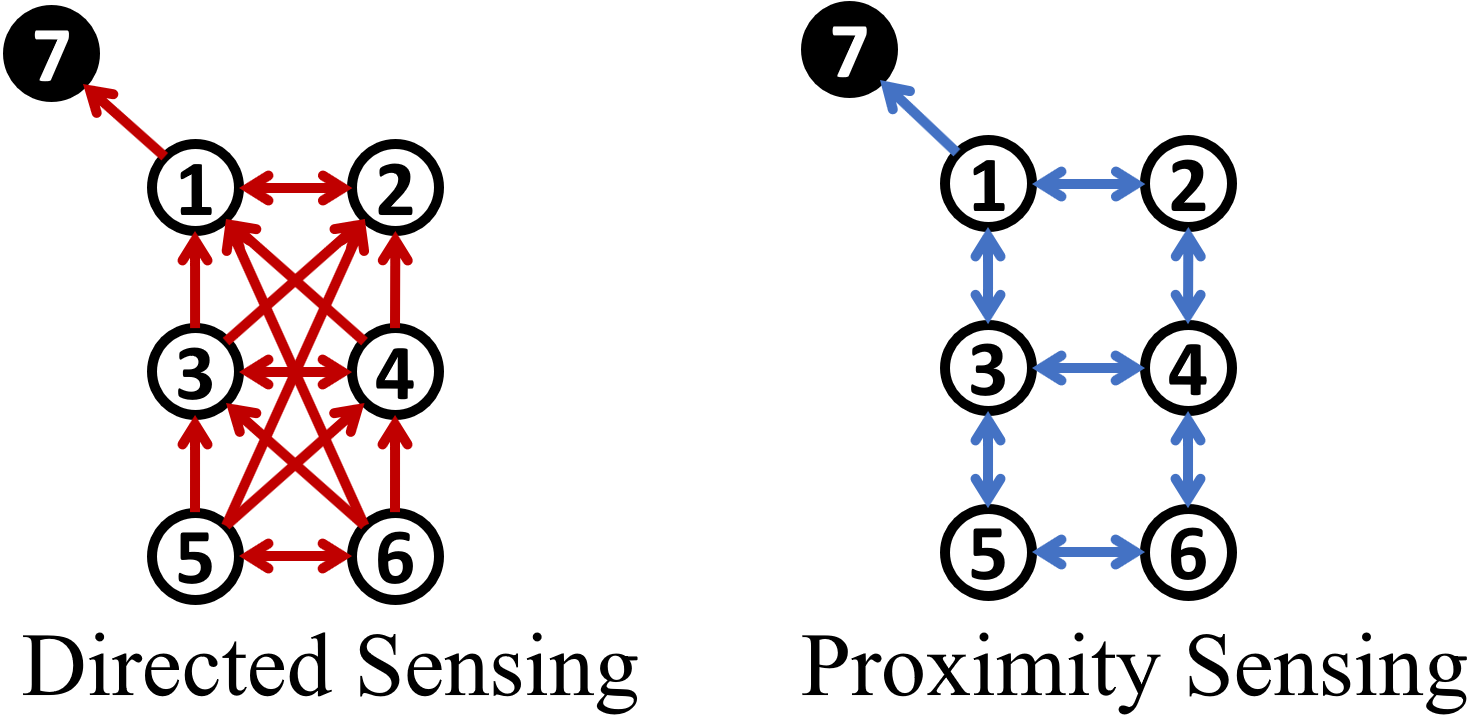}
  \caption{Duplex network with agents 1 to 6, layer 1 (red), and layer 2 (blue). Node 7, the external signal, is real since it appears in both layers.} % which agent 1 senses in each of the two layers. %with agent 1 sensing a real signal. The signal is represented by an dummy node - agent 7. 
  \label{fig:fish_network}
\end{figure}

% Cascade centrality measures the average size of cascade given the signal - initial active agent 7. As in reality every inidividual could sense the signal, we average the size of cascade when each agent sense the signal under a configuration to be the average size of cascade of the group under that configuration. 

As each agent can use either protocol, there are $2^6 = 64$ different possible sequences of protocols $\mathcal{U}$ in total. %Groups with different $\mathcal{U}$ would respond differently to a signal. 
%We are interested in the $\mathcal{U}$ where the group responds to real signals but not responds to spurious signals. 
%This can be captured by the following metric with a tunable parameter $c$. 
% Under a sequence of protocols $\mathcal{U}$, the metric is
We define the utility function $Q$ as a function of $\mathcal{U}$ and $c \geq 0$ to measure the benefit of cascades that result from real signals less the cost of cascades that result from spurious signals:
\begin{equation}
    Q(\mathcal{U}, c) = \frac{1}{6}\sum_{l=1}^6 \big( \mathcal{C}_7^{\mathcal{G}^l_{\mathrm{real}},\mathcal{U}} - c \frac{1}{2} (\mathcal{C}_7^{\mathcal{G}^l_{\mathrm{spur1}}, \mathcal{U}} + \mathcal{C}_7^{\mathcal{G}^l_{\mathrm{spur2}}, \mathcal{U}})  \big).
\end{equation}
Superscript $l$ indexes the agent sensing node 7. % and we average over this being each of the six agents.  
Subscripts ``real", ``spur1" and ``spur2" index the networks where node 7 appears in both layers, layer 1 only, and layer 2 only, respectively. Increasing  $c$ increases  cost of response to spurious signals relative to  benefit of response to real signals. %Given design parameter $c$, we can get 
Given $c$, the optimal sequence of protocols is $\mathcal{U}^c = \mathrm{argmax}_{\mathcal{U}} Q(\mathcal{U}, c)$.  

Fig. \ref{fig:percentage} illustrates $\mathcal{U}^c$ (with symmetry implied) on a plot of the optimal fraction of agents using AND as a function of $c$. A white (gray) circle represents an agent using OR (AND).  When $c$ is 0 or small, responding to real signals dominates and all agents use OR. % in the optimal solution.  
When $c$ is increases towards 3 and beyond, avoiding spurious signals dominates and all agents use AND. % in the optimal solution.  
For $c$ in between, the optimal solution is heterogeneous with more agents using AND as $c$ increases: first agent 1 or 2, then agents 1 and 2, then  \{1,2,5\} or \{1,2,6\}, then \{1,2,3,6\} or \{1,2,4,5\}, and then  \{1,2,3,5,6\} or \{1,2,4,5,6\}.

%For example, if the desired behavior is to be responsive to real signals, we can set $c=0$ and get $\mathcal{U}^0$. If the desired behavior is to be robust to false alarms, we can let $c\rightarrow \infty$ and get $\mathcal{U}^{\infty}$. As we can expect, a larger $c$ gives a $\mathcal{U}^c$ that is more robust to false alarms, which means a larger number of agents using protocol AND. This trend, along with the representative optimal $\mathcal{U}^c$, is shown in Fig. \ref{fig:percentage}. 
\begin{figure}[htbp]
  \centering
  \includegraphics[width=0.37\textwidth]{./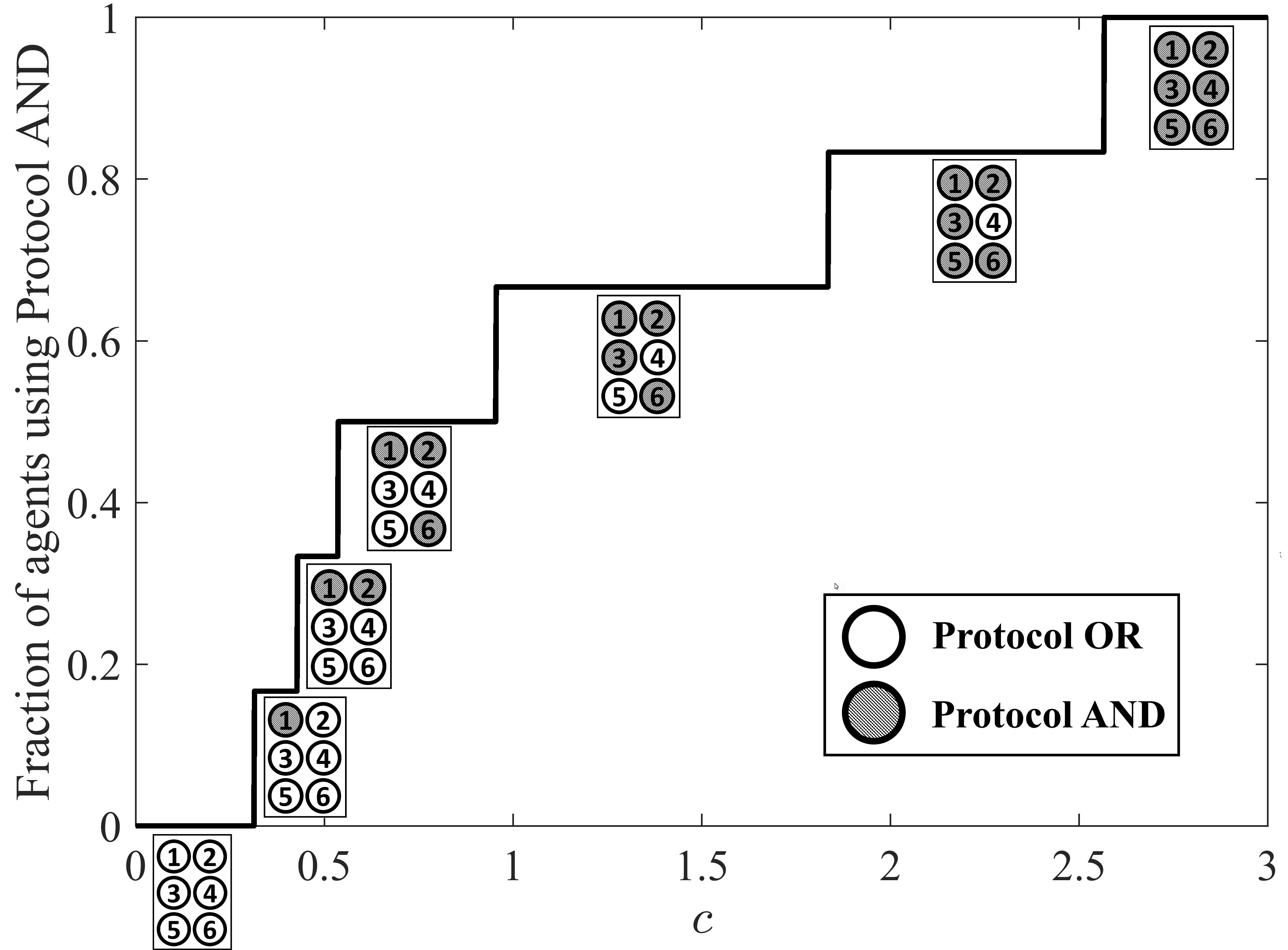}
  \caption{The optimal fraction of agents using Protocol AND with  illustration of optimal solution $\mathcal{U}^c$ as $c$ varies from 0 to 3. Symmetry is implied.} %The fraction increases as $c$ increases. 
 % The corresponding optimal solutions $\mathcal{U}^c$ are illustrated.
  %, where the shade represents agents using protocol AND. 
  \label{fig:percentage}
\end{figure}

\subsection{Large Heterogeneous Multiplex Networks}

We apply Corollary \ref{cor:is} to study multiplex cascade centrality for a random multiplex network with 20 agents and %and show the behavior of 
homogeneous and heterogeneous protocols. %groups.
We randomly generate duplex networks, for which the projection networks are DAGs, by fixing a topological order of nodes and assigning edges randomly with probability $p_e$.
A higher probability $p_e$ means agents sense a greater number of the other agents.
We consider homogeneous groups, where $u = \textrm{OR}$ and $u = \textrm{AND}$. 
We also consider heterogeneous groups in which each agent randomly chooses OR or AND with equal likelihood.

% With a probability close to 0, less agents are sensing others, while with a probability close to 1, the DAG is almost fully-connected. 
We let the root node be the initial active agent and study how the cascade centrality of the root changes as we vary $p_e$ from 0 to 1. For every value of $p_e$, we randomly generate 400 networks. % and leverage probabilistic inference (Corollary \ref{cor:is}) to calculate cascade centrality. 
Fig.~\ref{fig:trend} shows how the cascade centrality, averaged over the random networks, changes as a function of $p_e$.
\begin{figure}[htbp]
  \centering
  \includegraphics[width=2.0in]{./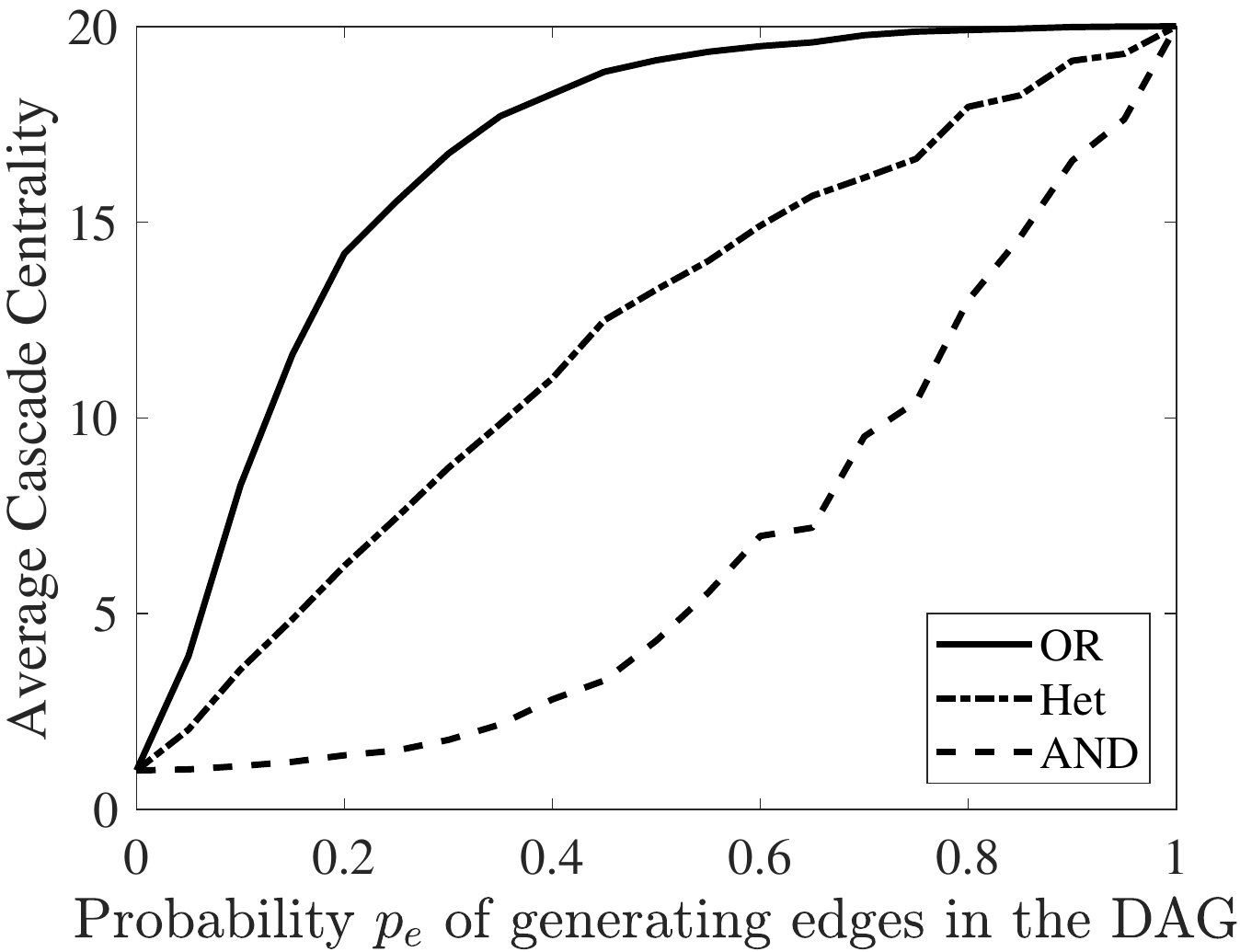}
  \caption{Multiplex cascade centrality of root node, averaged over 400 networks, as a function of probability $p_e$ of edges in the DAG.}
  \label{fig:trend}
\end{figure}

Regardless of the protocol, as $p_e \rightarrow 0$, the DAG become disconnected
%, the root node can influence no other agents, 
and the cascade centrality goes to 1 (only the root node is active at steady state). As $p_e \rightarrow 1$, the root node activates every other agent and the cascade centrality goes to 20. For $p_e$ in between, Fig.~\ref{fig:trend} shows 
%When $p_e$ is between these two extremes, we observe 
that the homogeneous groups with $u=\textrm{OR}$ are most readily activated and cascade size is sensitive  in the range $p_e \in (0,.5)$. Homogeneous groups with $u=\textrm{AND}$ are least readily activated and cascade size is sensitive  in the range $p_e \in (.5,1)$. %Cascade sizes for heterogeneous groups lie between these two limiting cases. 
%Fig.~\ref{fig:trend} shows 
% The trend for heterogeneous groups lies between the homogeneous cases, with cascade centrality observed to increase almost linearly with $p_e\in (0,1)$. %This result suggests that $p_e$ could be useful as a tuning parameter in the heterogeneous case. 

\section{Conclusion}
We have extended the LTM to multiplex networks where agents use different protocols that distinguish signals from multiple sensing modalities. We have derived algorithms to compute influence spread accurately using the multiplex LEM and approximately using probabilistic inference. We have shown how multiple sensing modalities affect spread and how heterogeneity trades off sensitivity and robustness of spread.

\bibliographystyle{IEEEtran}
\bibliography{IEEEabrv,myref}

% Generated by IEEEtran.bst, version: 1.14 (2015/08/26)
\begin{thebibliography}{10}
\providecommand{\url}[1]{#1}
\csname url@samestyle\endcsname
\providecommand{\newblock}{\relax}
\providecommand{\bibinfo}[2]{#2}
\providecommand{\BIBentrySTDinterwordspacing}{\spaceskip=0pt\relax}
\providecommand{\BIBentryALTinterwordstretchfactor}{4}
\providecommand{\BIBentryALTinterwordspacing}{\spaceskip=\fontdimen2\font plus
\BIBentryALTinterwordstretchfactor\fontdimen3\font minus
  \fontdimen4\font\relax}
\providecommand{\BIBforeignlanguage}[2]{{%
\expandafter\ifx\csname l@#1\endcsname\relax
\typeout{** WARNING: IEEEtran.bst: No hyphenation pattern has been}%
\typeout{** loaded for the language `#1'. Using the pattern for}%
\typeout{** the default language instead.}%
\else
\language=\csname l@#1\endcsname
\fi
#2}}
\providecommand{\BIBdecl}{\relax}
\BIBdecl

\bibitem{granovetter1978threshold}
M.~Granovetter, ``Threshold models of collective behavior,'' \emph{American
  Journal of Sociology}, vol.~83, no.~6, pp. 1420--1443, 1978.

\bibitem{schelling1978micromotives}
T.~C. Schelling, \emph{Micromotives and Macrobehavior}.\hskip 1em plus 0.5em
  minus 0.4em\relax Norton, 1978.

\bibitem{kempe2003maximizing}
D.~Kempe, J.~Kleinberg, and E.~Tardos, ``Maximizing the spread of influence
  through a social network,'' in \emph{Proc. 9th ACM SIGKDD Int. Conf.
  Knowledge Discovery and Data Mining}, 2003, p. 137–146.

\bibitem{lim2015simple}
Y.~Lim, A.~Ozdaglar, and A.~Teytelboym, ``A simple model of cascades in
  networks,'' MIT, Tech. Rep., 2015.

\bibitem{acemoglu2011diffusion}
D.~Acemoglu, A.~Ozdaglar, and E.~Yildiz, ``Diffusion of innovations in social
  networks,'' in \emph{IEEE Conf. Decision and Control}, 2011, pp. 2329--2334.

\bibitem{rosa2013non}
D.~Rosa and A.~Giua, ``A non-progressive model of innovation diffusion in
  social networks,'' in \emph{IEEE Conf. Decision and Control}, 2013, pp.
  6202--6207.

\bibitem{garulli2015analysis}
A.~Garulli, A.~Giannitrapani, and M.~Valentini, ``Analysis of threshold models
  for collective actions in social networks,'' in \emph{European Control
  Conference}, 2015, pp. 211--216.

\bibitem{fardad2017linear}
M.~Fardad and G.~Kearney, ``On a linear programming approach to the optimal
  seeding of cascading failures,'' in \emph{IEEE Conf. Decision and Control},
  2017, pp. 102--107.

\bibitem{rossi2017threshold}
W.~S. {Rossi}, G.~{Como}, and F.~{Fagnani}, ``Threshold models of cascades in
  large-scale networks,'' \emph{IEEE Transactions on Network Science and
  Engineering}, vol.~6, no.~2, pp. 158--172, 2019.

\bibitem{zhong2019cdc}
Y.~D. {Zhong} and N.~E. {Leonard}, ``A continuous threshold model of cascade
  dynamics,'' in \emph{IEEE Conf. Decision and Control}, 2019, pp. 1704--1709.

\bibitem{gomez_diffusion_2013}
S.~Gomez, A.~Diaz-Guilera, J.~Gomez-Gardeñes, C.~J. Perez-Vicente, Y.~Moreno,
  and A.~Arenas, ``Diffusion dynamics on multiplex networks,'' \emph{Physical
  Review Letters}, vol. 110, no.~2, 2013.

\bibitem{trpevski_discrete-time_2014}
I.~Trpevski, A.~Stanoev, A.~Koseska, and L.~Kocarev, ``Discrete-time
  distributed consensus on multiplex networks,'' \emph{New Journal of Physics},
  vol.~16, no.~11, p. 113063, 2014.

\bibitem{shao_relative_2017}
H.~Shao, Y.~Xi, M.~Mesbahi, D.~Li, Y.~Xu, and Z.~Gan, ``Relative {tempo} of
  {consensus} {dynamics} on {multiplex} {networks},'' \emph{IFAC-PapersOnLine},
  vol.~50, no.~1, pp. 5184--5189, Jul. 2017.

\bibitem{antonopoulos_opinion_2018}
C.~G. Antonopoulos and Y.~Shang, ``\BIBforeignlanguage{en}{Opinion formation in
  multiplex networks with general initial distributions},''
  \emph{\BIBforeignlanguage{en}{Scientific Reports}}, vol.~8, no.~1, 2018.

\bibitem{vasconcelos_consensus_2019}
V.~V. Vasconcelos, S.~A. Levin, and F.~L. Pinheiro, ``Consensus and
  polarization in competing complex contagion processes,'' \emph{Journal of the
  Royal Society Interface}, vol.~16, no. 155, p. 20190196, 2019.

\bibitem{yaugan2012analysis}
O.~Ya{\u{g}}an and V.~Gligor, ``Analysis of complex contagions in random
  multiplex networks,'' \emph{Physical Review E}, vol.~86, no.~3, p. 036103,
  2012.

\bibitem{salehi_spreading_2015}
M.~Salehi, R.~Sharma, M.~Marzolla, M.~Magnani, P.~Siyari, and D.~Montesi,
  ``Spreading processes in multilayer networks,'' \emph{IEEE Transactions on
  Network Science and Engineering}, vol.~2, no.~2, pp. 65--83, 2015.

\bibitem{zhong2017linear}
Y.~D. Zhong, V.~Srivastava, and N.~E. Leonard, ``On the linear threshold model
  for diffusion of innovations in multiplex social networks,'' in \emph{IEEE
  Conf. Decision and Control}, 2017, pp. 2593--2598.

\bibitem{yang-multipex}
L.~{Yang}, Z.~{Yu}, M.~A. {El-Meligy}, A.~M. {El-Sherbeeny}, and N.~{Wu}, ``On
  multiplexity-aware influence spread in social networks,'' \emph{IEEE Access},
  vol.~8, pp. 106\,705--106\,713, 2020.

\bibitem{pearl1988probabilistic}
J.~Pearl, \emph{Probabilistic Reasoning in Intelligent Systems: Networks of
  Plausible Inference}.\hskip 1em plus 0.5em minus 0.4em\relax Morgan Kaufmann,
  1988.

\bibitem{murphy1999loopy}
K.~P. Murphy, Y.~Weiss, and M.~I. Jordan, ``Loopy belief propagation for
  approximate inference: An empirical study,'' in \emph{Proc. 15th Conf.
  Uncertainty in Artificial Intelligence}, 1999, pp. 467--475.

\bibitem{lauritzen1988local}
S.~L. Lauritzen and D.~J. Spiegelhalter, ``Local computations with
  probabilities on graphical structures and their application to expert
  systems,'' \emph{J. Royal Statistical Society, Series B}, pp. 157--224, 1988.

\bibitem{nguyen2012influence}
H.~Nguyen and R.~Zheng, ``Influence spread in large-scale social networks--a
  belief propagation approach,'' in \emph{Machine Learning and Knowledge
  Discovery in Databases}, 2012, pp. 515--530.

\end{thebibliography}

% biography section
% 
% If you have an EPS/PDF photo (graphicx package needed) extra braces are
% needed around the contents of the optional argument to biography to prevent
% the LaTeX parser from getting confused when it sees the complicated
% \includegraphics command within an optional argument. (You could create
% your own custom macro containing the \includegraphics command to make things
% simpler here.)
%\begin{IEEEbiography}[{\includegraphics[width=1in,height=1.25in,clip,keepaspectratio]{mshell}}]{Michael Shell}
% or if you just want to reserve a space for a photo:

% \begin{IEEEbiography}{Michael Shell}
% Biography text here.
% \end{IEEEbiography}

% if you will not have a photo at all:
% \begin{IEEEbiographynophoto}{John Doe}
% Biography text here.
% \end{IEEEbiographynophoto}

% insert where needed to balance the two columns on the last page with
% biographies
%\newpage

% \begin{IEEEbiographynophoto}{Jane Doe}
% Biography text here.
% \end{IEEEbiographynophoto}

% You can push biographies down or up by placing
% a \vfill before or after them. The appropriate
% use of \vfill depends on what kind of text is
% on the last page and whether or not the columns
% are being equalized.

%\vfill

% Can be used to pull up biographies so that the bottom of the last one
% is flush with the other column.
%\enlargethispage{-5in}

% that's all folks

\end{document}